\begin{document}

\newtheorem{theorem}{Theorem}[section]
\newtheorem{lemma}{Lemma}[section]
\newtheorem{corollary}{Corollary}[section]
\newtheorem{remark}{Remark}[section]
\newtheorem{definition}{Definition}[section]

\newcommand{\1}{\hat{1}}
\newcommand{\0}{\hat{0}}
\newcommand{\Co}{\mathcal{C}}
\newcommand{\Sub}{\mathcal{L}}
\newcommand{\Sco}{\mathcal{S}}
\newcommand{\im}{\mathop{\rm Im}\nolimits}
\newcommand{\Hdim}{\mathop{\rm Hdim}\nolimits}
\newcommand{\Sz}{\mathop{\rm Sz}\nolimits}
\renewcommand{\leq}{\leqslant}
\renewcommand{\geq}{\geqslant}

\title{Homotopy types of group lattices}
\author{Kramarev I.P., Lokutsievskiy L.V.\date{}}

\maketitle
% \keywords{subgroup lattice, coset poset, finite simple group, homology group, homotopy type}

\begin{abstract}
  In this article we study group lattices using the ideas by K.S.Brown and D.Quillen of associating a certain topological space to a partially ordered set. We determine the exact homotopy type for the subgroup lattice of $PSL(2,7)$, find a connection between different group lattices and obtain some estimates for the Betty numbers of these lattices using the spectral sequence method.
\end{abstract}

\section{Introduction}

Let $P$ be a finite partially ordered set (poset, for short). One can associate a simplicial complex with $P$ in a canonical way.

\begin{definition}
  Let $\Delta P$ denote a simplicial complex with a vertex set $P$ consisting of such simplices $h_1h_2\ldots h_k$ that for some permutation $\sigma\in S_{k+1}$ we have $h_{\sigma(1)}<h_{\sigma(2)}<\ldots<h_{\sigma(k)}$ in $P$.
\end{definition}

Let $P$ and $Q$ be posets. A map $f$: $P\to Q$ is called a morphism (or map) of posets if it preserves the non-strict order, i.e. for all $x$, $y \in P$ such that $x\leq y$ we have $f(x) \leq f(y)$ in $Q$. The map $f$ induces a simplicial map $\Delta f$ from $\Delta P$ into $\Delta Q$ by an obvious rule. Thus any poset inclusion $P\subseteq Q$ induces an inclusion map of the associated simplicial complexes: $\Delta P\subseteq \Delta Q$.

Following D.Quillen we use the construction $P\to\Delta P$ to assign topological concepts to posets. For example, we call $P$ contractible provided $\Delta P$ is contractible, and we define the homology groups of $P$ to be those of $\Delta P$.

Consider a finite group $G$. The set of all subgroups of $G$ ordered by inclusion forms a lattice with a proper part $\Sub G=\{H \mid 1<H<G\}$. The set of cosets of all subgroups (including $\emptyset$ and $G$) ordered by inclusion also forms a lattice and $\Co G = \{xH \mid H<G,\ x\in G\}$ is its proper part (see \cite{Brown}). Thus the natural question arises: what homotopy type can spaces $\Delta\Sub G$ and $\Delta\Co G$ have for arbitrary finite group $G$?
 
K.Brown, C.Kratzer and J.Thevenaz proved that if $G$ is solvable, then both $\Sub G$ and$\Co G$ are homotopy equivalent to wedges of equidimensional spheres (see \cite{KraThev,Brown}). This fact is really intriguing, because a poset not associated to a finite group can have almost any homotopy type. Namely, for any finite simplicial complex $X$ there exists a finite poset $P$ such that $X$ and $\Delta P$ are homotopy equivalent (see \cite{Quillen}).

The problem of determining the homotopy type of $\Sub G$ and $\Co G$ for any finite group $G$ is still open. The case of simple groups seem to offer the main difficulty. The homotopy complementation formula by Bj\"orner and Walker (see \cite{BjornerWelker}) and similar results which allowed to compute the exact homotopy type of lattices of solvable groups depend on the existence of a normal subgroup and therefore cannot be used in this case.

Studying the shellability property of subgroup lattices of finite groups Shareshian proved that for certain series of finite simple groups ($L_2(p)$ for prime $p\equiv 3,5\pmod{8}$, $L_2(2^p)$, $L_2(3^p)$ and $Sz(2^p)$ for prime $p$) the homotopy type of lattice $\Sub G$ is that of a wedge of $|G|$ circles (see \cite{Shareshian}). The proof was based on the fact that in each case a subgroup lattice can be reduced to a 1-dimensional connected using Quillen's fiber lemma. Any such complex is obviously homotopy equivalent to a wedge of circles.

Therefore, another question may be of interest: does there exist a minimal simple group whose subgroup lattice has homotopy type different from a wedge of equidimensional spheres? We give an example of such group by determining the exact type of $\Sub PSL(2,7)$ which is a wedge of 48 circles and 48 spheres.

In this work we also use spectral sequence method to obtain estimates for Betti numbers of complexes $\Sub G$ and $\Co G$ for any finite group $G$ as well as more precise results for certain groups ($\Co PSL(2,7)$, $\Sub\Sz(2^{pq})$ and $\Sub\Sz(2^{p^k})$, for prime $p$ and $q$).

\section{Homotopy Methods}

For an element $h\in P$ we will use the following notation: $P_{<h}=\{x\in P: x<h\}$ (posets $P_{\leq h}$, $P_{>h}$ and $P_{\geq h}$ are defined similarly), $P_{\neq h}=\{x\in P: x\neq h\}$ and $P_{\notin M}=P\setminus M$ for any subset $M\subseteq P$.

The main tools for dealing with topological properties of posets are the Quillen's Fiber Lemma and the Homotopy Complementation Formula by Bj\"orner and Walker:

\begin{lemma}[Quillens' Fiber Lemma, \cite{Quillen}]
  Let $f$: $P \to Q$ be a map of finite posets such that upper fibers $f^{-1}(Q_{\geq x})$ are contractible for all $x\in Q$ (respectively, lower fibers $f^{-1}(Q_{\leq x})$ are contractible for all $x\in Q$). Then $f$ induces a homotopy equivalence between $P$ and $Q$.
\end{lemma}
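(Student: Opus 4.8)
The plan is to prove the lower-fiber version; the upper-fiber version then follows by passing to the opposite posets, since $\Delta P=\Delta P^{\mathrm{op}}$ and the upper fibers of $f$ are the lower fibers of the induced map $f^{\mathrm{op}}\colon P^{\mathrm{op}}\to Q^{\mathrm{op}}$. The first ingredient I would establish is the elementary \emph{order homotopy lemma}: if two poset maps $g,h\colon X\to Y$ satisfy $g(x)\leq h(x)$ for all $x$, then $\Delta g$ and $\Delta h$ are homotopic. This is seen by realizing the poset $X\times\{0<1\}$ as a cylinder over $\Delta X$ and mapping it to $Y$ via $(x,0)\mapsto g(x)$, $(x,1)\mapsto h(x)$. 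An immediate consequence is that any poset with a greatest or least element is contractible, the constant map to that element being comparable to the identity.

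Next I would replace $f$ by the projection from its \emph{mapping cylinder poset}
$$S=\{(p,q)\in P\times Q : f(p)\leq q\},$$
with projections $\pi_P\colon S\to P$ and $\pi_Q\colon S\to Q$. The map $\pi_P$ is an equivalence for a soft reason: the section $s\colon p\mapsto(p,f(p))$ satisfies $\pi_P\circ s=\mathrm{id}_P$ and $s\circ\pi_P\leq\mathrm{id}_S$, so by the order homotopy lemma $s$ is a homotopy inverse of $\pi_P$. Since $f=\pi_Q\circ s$ with $s$ an equivalence, it suffices to prove that $\pi_Q$ is a homotopy equivalence. Moreover, for each $q\in Q$ the lower fiber $\pi_Q^{-1}(Q_{\leq q})=\{(p,q')\colon f(p)\leq q'\leq q\}$ deformation retracts, again by the order homotopy lemma (pushing $q'$ up to $q$), onto a copy of $f^{-1}(Q_{\leq q})$, which is contractible by hypothesis. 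So the whole problem reduces to showing that $\pi_Q$, a map with contractible lower fibers, is an equivalence.

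To finish I would induct on $|Q|$. Choosing a maximal element $q_0$, write $Q=Q'\cup Q_{\leq q_0}$ with $Q'=Q_{\neq q_0}$ and $Q'\cap Q_{\leq q_0}=Q_{<q_0}$; this realizes $\Delta Q$ as the pushout of the subcomplex inclusions $\Delta Q'\leftarrow\Delta(Q_{<q_0})\rightarrow\Delta(Q_{\leq q_0})$. Pulling back along $\pi_Q$ gives the analogous pushout for $\Delta S$. The restrictions of $\pi_Q$ over $Q'$ and over $Q_{<q_0}$ again have contractible lower fibers (one checks that $Q_{\leq q}$ is unchanged for $q\neq q_0$ when the maximal $q_0$ is deleted), hence are equivalences by the inductive hypothesis; the restriction over $Q_{\leq q_0}$ sends the contractible fiber to the contractible cone $Q_{\leq q_0}$. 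A map of pushout squares that is an equivalence on all three corners, along cofibrant (subcomplex) horizontal inclusions, is an equivalence on the pushouts, yielding that $\pi_Q$, and therefore $f$, is a homotopy equivalence.

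The main obstacle is this final gluing step: it is the one genuinely topological input, and it is where the hypothesis \emph{contractible} (rather than merely homology-acyclic) is essential, since controlling the fundamental group across the pushout requires van Kampen, not just Mayer--Vietoris. An alternative that avoids the induction is to observe that $\Delta S$ is the homotopy colimit over $Q$ of the diagram $q\mapsto\Delta(f^{-1}(Q_{\leq q}))$ of contractible spaces, whence $\Delta S\simeq\mathrm{hocolim}_Q(\ast)=\Delta Q$; I would nonetheless regard the inductive gluing argument as the more self-contained route here.
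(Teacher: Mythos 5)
The paper does not prove this lemma at all---it is quoted from Quillen's article as a known tool---so there is no in-paper argument to compare against; your proposal has to stand on its own, and it does. The reduction to the lower-fiber case via opposite posets is valid because $\Delta P=\Delta P^{\mathrm{op}}$, the order homotopy lemma is correctly deduced from the homeomorphism $|\Delta(X\times\{0<1\})|\cong|\Delta X|\times[0,1]$, and the mapping-cylinder poset $S$ with its section $s(p)=(p,f(p))$ correctly reduces the problem to the projection $\pi_Q$, whose lower fibers you rightly identify (up to the order homotopy) with the fibers $f^{-1}(Q_{\leq q})$. The inductive gluing step is also sound: removing a maximal $q_0$ does not change $Q_{\leq q}$ for $q\neq q_0$, the piece over $Q_{\leq q_0}$ is a map of contractible spaces, and the gluing lemma for maps of pushouts along subcomplex (cofibration) inclusions finishes the argument; your remark that this is where contractibility, as opposed to acyclicity, is genuinely used is apt. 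This is essentially the standard modern proof of the poset fiber lemma (as in Walker or Bj\"orner), differing from Quillen's original simplicial-set argument but entirely adequate, and in fact more self-contained than what the paper offers, which is nothing.
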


\begin{definition}
  The join (or the least upper bound) of elements $x$ and $y$ of a poset $P$ is defined as an element $x \lor y = \inf\limits_z \{z \mid x,y \leq z\}$ (it it exists). The meet (or the greatest lower bound) $x \land y$ is defined similarly.
  
  A poset $P$ is called a lattice if for any elements $x$ and $y$ of $P$ there exist $x \lor y$ and $x \land y$ in $P$.

  A lattice is bounded provided it contains the greatest element $\1$ and the least element $\0$. Obviously, every finite lattice is bounded. The proper part of a bounded lattice $L$ is a subposet $\overline{L}=L\setminus\{\0,\1\}$.
\end{definition}

It is easy to check that if a poset $P$ contains an element $h_0$ which is comparable to all element of $P$ (e.g. $\hat 0$ or $\hat 1$), then $\Delta P$ is contractible as $\Delta P$ is a cone over $\Delta P_{\neq h_0}$ in this case: $\Delta P=C\Delta P_{\neq h_0}$. Particularly, every finite lattice is contractible. Therefore by topological properties of a lattice $L$ we mean those of its proper part.

\begin{theorem}[Homotopy Complementation Formula, Bj\"orner, Walker, \cite{BjornerWelker}]
  Let $L$ be a finite lattice and $z\in L$. Denote $z^\perp=\{ x \in L \mid x\land z=\0,\ x\lor z=\1 \}$. Then:
  \begin{enumerate}
   \item $\overline{L} \setminus z^\perp$ is contractible,
   \item if $z^\perp$ is an antichain (i.e. any two elements of $z^\perp$ are incomparable), then
      $$
        \overline{L} \cong
        \bigvee\limits_{y\in z^\perp} \Sigma\left( \overline{L}_{<y} \ast \overline{L}_{>y} \right),
      $$
    where $X\ast Y$ denotes a join of topological spaces $X$ and $Y$, and $\Sigma(X)$ denotes a suspension over topological space $X$.
  \end{enumerate}
\end{theorem}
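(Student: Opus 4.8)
The plan is to treat part (1) as the engine: once $\overline{L}\setminus z^\perp$ is known to be contractible, part (2) follows from a standard collapsing argument, so I would spend most of the effort on (1) and then assemble (2). Throughout write $C=\overline{L}\setminus z^\perp$ and decompose it as $C=A\cup B$, where
\[
A=\{x\in\overline{L}\mid x\lor z\neq\1\},\qquad B=\{x\in\overline{L}\mid x\land z\neq\0\}.
\]
Indeed $x\notin z^\perp$ means exactly that $x\lor z\neq\1$ or $x\land z\neq\0$, so $C=A\cup B$; moreover $A$ is an order ideal and $B$ an order filter of $\overline{L}$. (Here I take $z\in\overline{L}$, i.e. $z\neq\0,\1$, which is the only case in which the assertion of part (1) can hold, since for $z=\0$ or $z=\1$ one gets $z^\perp\cap\overline{L}=\emptyset$.)

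First I would show that $A$, $B$ and $A\cap B$ are each contractible. The map $x\mapsto x\lor z$ is a monotone self-map of $A$ lying above the identity, and its image is $\overline{L}_{\geq z}$, a subposet with least element $z$ and hence a cone, which is contractible by the paper's remark (as $z$ is comparable to everything in it). By the order-homotopy principle (a monotone self-map above the identity is homotopic to the identity, which follows from Quillen's Fiber Lemma) this retracts $A$ onto that contractible image. Dually $x\mapsto x\land z$ contracts $B$ onto $\overline{L}_{\leq z}$, and the join map contracts $A\cap B$ onto $\overline{L}_{\geq z}$ as well, so all three pieces are contractible.

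The hard part is passing from ``$A$, $B$, $A\cap B$ contractible'' to ``$C=A\cup B$ contractible''. This is genuinely delicate, and I expect it to be the \emph{main obstacle}: one cannot simply invoke Mayer--Vietoris or the Nerve Lemma, because $\Delta A\cup\Delta B$ is a \emph{proper} subcomplex of $\Delta C$. The missing simplices are the ``crossing'' chains $x_1<\dots<x_k$ that begin in $A\setminus B$ and end in $B\setminus A$, and such chains really do occur (already in small Boolean lattices). For the same reason no single monotone self-map of $C$ can work: on low elements one must apply $\lor z$ and on high elements $\land z$, and these prescriptions reverse order across the boundary between the two regions. I therefore expect the honest argument to realize $\Delta C$ as the homotopy pushout of $\Delta A\leftarrow\Delta(A\cap B)\to\Delta B$ (a homotopy-colimit / order-homotopy argument rather than a literal simplicial decomposition) and to deduce contractibility from the contractibility of the three pieces.

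For part (2), assume $z^\perp$ is an antichain. In $\Delta\overline{L}$ the link of a vertex $y\in z^\perp$ is $\Delta\overline{L}_{<y}\ast\Delta\overline{L}_{>y}$, since any chain through $y$ splits into a part below and a part above $y$. Because $z^\perp$ is an antichain, no simplex contains two of the $y$'s, so every simplex lies either in $\Delta C$ or in the closed star of a unique $y$; furthermore each link is contained in $\Delta C$, and the closed stars of distinct vertices meet only inside $\Delta C$. Now collapse the contractible subcomplex $\Delta C$ to a point: since $C$ is contractible by part (1), this is a homotopy equivalence $\Delta\overline{L}\simeq\Delta\overline{L}/\Delta C$. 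In the quotient each closed star $y\ast\operatorname{link}(y)$ has its base collapsed, and a cone with its base collapsed is the suspension of the base; the stars now meet only at the single collapse point, producing a wedge. Hence
\[
\overline{L}\;\simeq\;\bigvee_{y\in z^\perp}\Sigma\bigl(\Delta\overline{L}_{<y}\ast\Delta\overline{L}_{>y}\bigr)\;=\;\bigvee_{y\in z^\perp}\Sigma\bigl(\overline{L}_{<y}\ast\overline{L}_{>y}\bigr),
\]
which is the claimed formula. The only nontrivial input is the contractibility of $C$, so the crux of the entire proof is the gluing step flagged above.
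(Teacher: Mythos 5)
The paper itself gives no proof of this theorem --- it is imported verbatim from Bj\"orner--Walker \cite{BjornerWelker} --- so your argument has to stand on its own. Your part (2) is essentially correct modulo part (1), and it is the same collapsing mechanism the paper uses to prove its own Theorem \ref{vee_sigma}: the closed star of each $y\in z^\perp$ is the cone $y\ast\bigl(\Delta\overline{L}_{<y}\ast\Delta\overline{L}_{>y}\bigr)$, the antichain hypothesis forces distinct cones to meet only in their bases, every simplex missing $z^\perp$ lies in the contractible complement, and collapsing that complement turns each cone into a suspension wedged at the collapse point.

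The genuine gap is exactly where you flag it, in part (1), and the repair you sketch does not work. It is false in general that for $P=I\cup F$ with $I$ an order ideal and $F$ an order filter one has $\Delta P\simeq \Delta I\cup_{\Delta(I\cap F)}\Delta F$ as a homotopy pushout: take $P=\{a,b,w\}$ with the single relation $a<b$, $I=\{a,w\}$, $F=\{b,w\}$. Then $I\cap F=\{w\}$ is contractible, but the homotopy pushout of $\Delta I\leftarrow\Delta(I\cap F)\to\Delta F$ has three connected components while $\Delta P$ has two --- the discrepancy is precisely the crossing chain $a<b$. So ``$A$, $B$, $A\cap B$ contractible'' plus a pushout principle cannot finish the proof; the crossing chains must be handled using more of the lattice structure. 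A clean way to close the gap with the tools you already invoke is to forget $B$ and apply Quillen's Fiber Lemma to the inclusion $\iota\colon A\hookrightarrow C$. For $x\in A$ the lower fiber $A_{\leq x}$ has greatest element $x$ and is a cone. For $x\in C\setminus A$ one has $x\lor z=\hat{1}$ and $a:=x\land z\neq\hat{0}$, so $a\in A_{\leq x}$; the map $w\mapsto w\lor a$ is a closure operator on $A_{\leq x}$ (it stays below $x$, and $(w\lor a)\lor z=w\lor z\neq\hat{1}$ keeps it in $A$) whose image has least element $a$, so $A_{\leq x}$ is contractible as well. Hence $\Delta C\simeq\Delta A$, and $\Delta A$ is contractible by your own closure-operator argument $x\mapsto x\lor z$ onto the cone $\overline{L}_{\geq z}$. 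With that substitution part (1), and hence the whole theorem, goes through.
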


We also mention the following well-known corollary of Quillen's Fiber Lemma (see \cite{Shareshian}):

\begin{lemma}
\label{Coatoms}
  Let $L$ be a proper part of some finite lattice $\mathcal{L}$, $M$ be a set of all elements $x\in L$ such that $x=\bigwedge_{c\in C}c$, where $C$ is some subset of maximal elements of $L$. Then $L$ and $M$ are homotopy equivalent.
\end{lemma}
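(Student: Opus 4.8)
The plan is to realize $M$ as the image of a closure operator on $L$ and then to apply Quillen's Fiber Lemma to the inclusion $i\colon M\hookrightarrow L$. First I would record the shape of $M$: the maximal elements of $L$ are exactly the coatoms of $\mathcal{L}$, i.e. the elements covered by $\1$, and each $m\in M$ is the meet of a \emph{nonempty} family of coatoms (the empty meet equals $\1\notin L$, so it contributes nothing). The natural candidate for the deformation is the map $r\colon L\to L$ sending $x$ to the meet of all coatoms lying above it,
$$
  r(x)=\bigwedge\{c \mid c\ \text{is a coatom of}\ \mathcal{L}\ \text{and}\ c\geq x\}.
$$

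Next I would check that $r$ is well defined and lands in $M$. Since $\mathcal{L}$ is finite and $x<\1$, the set of coatoms above $x$ is nonempty, so $r(x)$ is a meet of a nonempty family of coatoms. Because $r(x)\geq x>\0$ and $r(x)\leq c<\1$ for any such coatom $c$, we obtain $\0<r(x)<\1$, hence $r(x)\in L$ and therefore $r(x)\in M$. It is immediate that $r$ is order preserving (enlarging $x$ can only shrink the family of coatoms above it, and hence enlarge the meet) and that $r$ fixes $M$ pointwise: if $m=\bigwedge_{c\in C}c$, then every $c\in C$ is a coatom above $m$, so $r(m)\leq m$, while $r(m)\geq m$ always; thus $r|_M=\mathrm{id}_M$.

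Finally I would compute the upper fibers of $i$. For $x\in L$ the fiber is $i^{-1}(L_{\geq x})=M_{\geq x}=\{m\in M\mid m\geq x\}$, and I claim $r(x)$ is its least element. Indeed $r(x)\in M_{\geq x}$; conversely, if $m=\bigwedge_{c\in C}c\in M$ satisfies $m\geq x$, then each $c\in C$ obeys $c\geq m\geq x$, so $C$ is contained in the family of all coatoms above $x$, whence $m\geq r(x)$. A poset possessing a least element is a cone and hence contractible, so every upper fiber is contractible, and Quillen's Fiber Lemma yields that $i$ induces a homotopy equivalence, i.e. $L$ and $M$ are homotopy equivalent. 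The only genuinely delicate point is this last domination claim, that $m\geq x$ forces $m\geq r(x)$; everything else is routine bookkeeping about meets of coatoms.
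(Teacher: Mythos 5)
Your proof is correct. The paper states this lemma without proof, citing it as a well-known corollary of Quillen's Fiber Lemma (via Shareshian), and your argument --- the closure operator $r(x)=\bigwedge\{c \mid c \text{ a coatom},\ c\geq x\}$ together with the observation that each upper fiber $M_{\geq x}$ has $r(x)$ as a least element and is therefore contractible --- is exactly the standard derivation the paper is invoking, with all the routine checks (nonemptiness of the coatom family, $\0<r(x)<\1$, $r|_M=\mathrm{id}$) carried out properly.
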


  The last lemma allows us to reduce the complexity of a lattice being examined greatly. For example, it follows that the poset $\Sub A_5$ is homotopy equivalent to a 1-dimensional complex which is connected as well as the original poset. Once can easily check that its reduced Euler characteristics is -60, hence $\Sub A_5$ is homotopy equivalent to a wedge of 60 circles.

  For many minimal simple groups ($L_2(p)$ for prime $p\equiv 3,5\pmod{8}$, $L_2(2^p)$, $L_2(3^p)$ and $Sz(2^p)$ for prime $p$) the situation is similar, however one still needs to use Quillen's fiber lemma to remove a number of elements to get a 1-dimensional complex. 

\begin{corollary}
  Keep the notation of the previous lemma. Let $R$ be a proper part of some sublattice of $\mathcal{L}$ such that $M \subseteq R \subseteq L$. Then $L$ and $R$ are homotopy equivalent.
\end{corollary}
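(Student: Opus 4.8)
The plan is to reuse, restricted to the intermediate poset $R$, the retraction onto $M$ that underlies Lemma \ref{Coatoms}. Concretely, I would work with the order-preserving map $r\colon L\to M$ sending each $x$ to the meet
$$
  r(x)=\bigwedge\{c : c\text{ is maximal in }L,\ c\geq x\}
$$
of all maximal elements of $L$ lying above it. This map is well defined: in a finite lattice every $x<\1$ lies below some coatom, so the indexing set is nonempty and $\0<x\leq r(x)<\1$, whence $r(x)\in L$; being a meet of maximal elements, $r(x)\in M$. From $x\leq r(x)$ it follows that $r$ is monotone, and $r$ fixes $M$ pointwise, since for $m=\bigwedge_{c\in C}c$ the set $C$ is contained in the maximal elements above $m$, giving $r(m)\leq m\leq r(m)$.

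First I would observe that, because $M\subseteq R\subseteq L$ and $r(L)\subseteq M\subseteq R$, the map $r$ restricts to an order-preserving map $r|_R\colon R\to M$ that still fixes $M$ pointwise. So we obtain a diagram of inclusions $M\subseteq R\subseteq L$ together with retractions $r$ and $r|_R$ of $L$ and of $R$ onto the common subposet $M$.

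Next I would check that $r|_R$ is a homotopy equivalence via Quillen's Fiber Lemma, using lower fibers. For $m\in M$,
$$
  (r|_R)^{-1}(M_{\leq m})=\{x\in R : r(x)\leq m\}=\{x\in R : x\leq m\}=R_{\leq m},
$$
where the middle equality holds because $x\leq r(x)$ forces $r(x)\leq m\Rightarrow x\leq m$, while $x\leq m$ together with $r(m)=m$ and monotonicity gives $r(x)\leq m$. Since $m\in M\subseteq R$, the poset $R_{\leq m}$ has a greatest element $m$ and is therefore contractible. Hence every lower fiber is contractible and $r|_R$ induces a homotopy equivalence $R\simeq M$. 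Combined with $L\simeq M$ from Lemma \ref{Coatoms}, transitivity yields $R\simeq L$ (equivalently, the inclusion $R\hookrightarrow L$ is an equivalence by the two-out-of-three property applied to $M\hookrightarrow R\hookrightarrow L$, since each retraction exhibits its inclusion as a homotopy equivalence).

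The main thing to get right, and the point where a naive approach breaks, is the temptation to apply Lemma \ref{Coatoms} directly to the sublattice whose proper part is $R$. That fails because the maximal elements of $R$ need not be maximal in $L$, so the ``meet-of-coatoms'' poset built inside $R$ could be strictly larger than $M$ and need not coincide with it. Reusing the single global retraction $r$, defined through the maximal elements of $L$, avoids this entirely; indeed, the computation above uses only the sandwiching $M\subseteq R\subseteq L$ and that $R$ is a finite subposet, so I expect the hypothesis that $R$ is the proper part of a \emph{sublattice} to be inessential for this argument and to be present only to fix the setting in which the corollary is applied.
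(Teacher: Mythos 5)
Your proof is correct, but it takes a genuinely different route from the paper's, and the reason you give for avoiding the paper's route is mistaken. The paper's entire proof is the observation that Lemma \ref{Coatoms} applies verbatim to $R$: the set of nonempty meets of maximal elements of $R$ is again exactly $M$. Your objection --- that maximal elements of $R$ need not be maximal in $L$ --- cannot occur here, precisely because of the sandwiching you otherwise exploit: every maximal element of $L$ belongs to $M\subseteq R$, so any $y\in R$ that is non-maximal in $L$ lies below some maximal element of $L$ that is also in $R$, hence $y$ is non-maximal in $R$ as well. Thus the maximal elements of $R$ and of $L$ coincide, and since $\mathcal{R}$ is a sublattice of $\mathcal{L}$ its meets agree with those of $\mathcal{L}$, so the ``meet-of-coatoms'' poset built inside $R$ is $M$ itself and Lemma \ref{Coatoms} gives $R\simeq M\simeq L$ at once. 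What your longer argument buys is generality: by restricting the single retraction $r(x)=\bigwedge\{c\ \text{maximal in}\ L : c\geq x\}$ to $R$ and checking lower fibers directly, you never need to apply Lemma \ref{Coatoms} to $R$, hence you never need $R$ to be the proper part of a sublattice --- any subposet with $M\subseteq R\subseteq L$ works, which is indeed a (slightly) stronger statement than the corollary as stated. One local slip: monotonicity of $r$ does not follow ``from $x\leq r(x)$''; it follows because $x\leq y$ implies that the set of maximal elements above $y$ is contained in the set of those above $x$, whence $r(x)\leq r(y)$. The remaining steps --- well-definedness in a finite lattice, $r|_M=\mathrm{id}$, the fiber identity $(r|_R)^{-1}(M_{\leq m})=R_{\leq m}$, and contractibility of $R_{\leq m}$ via its greatest element $m$ --- are all sound.
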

\begin{proof}
  The sets of all non-empty intersections of maximal elements of $L$ and $R$ coincide (with $M$).
\end{proof}

  Unfortunately, Lemma \ref{Coatoms} cannot be used iterative as we cannot delete any new element. Thus, it is naturally to ask a question: is it possible to ``get rid'' of some maximal elements? We were able to show that in a more general case the homotopy type of a poset $P$ can be determined using the topology of its subposets.

\begin{remark}
  Let $P$ be a finite poset, $m \in P$. Then
  $$
    \Delta(P_{<m}\cup P_{>m}) = \Delta P_{<m} * \Delta P_{>m}.
  $$
\end{remark}
\begin{proof}
  Any element of $P_{>m}$ is greater than any element of $P_{<m}$, thus any chain in $P_{<m}\cup P_{>m}$ is a union of some chain in $P_{>m}$ and some chain in $P_{<m}$ (note, that either chain may be empty). But such chains correspond to the simplices of a join of spaces $\Delta P_{<m}$ and $\Delta P_{>m}$.
\end{proof}

\begin{wrapfigure}{r}{5.5cm}
  \begin{center}
    \includegraphics[width=5cm]{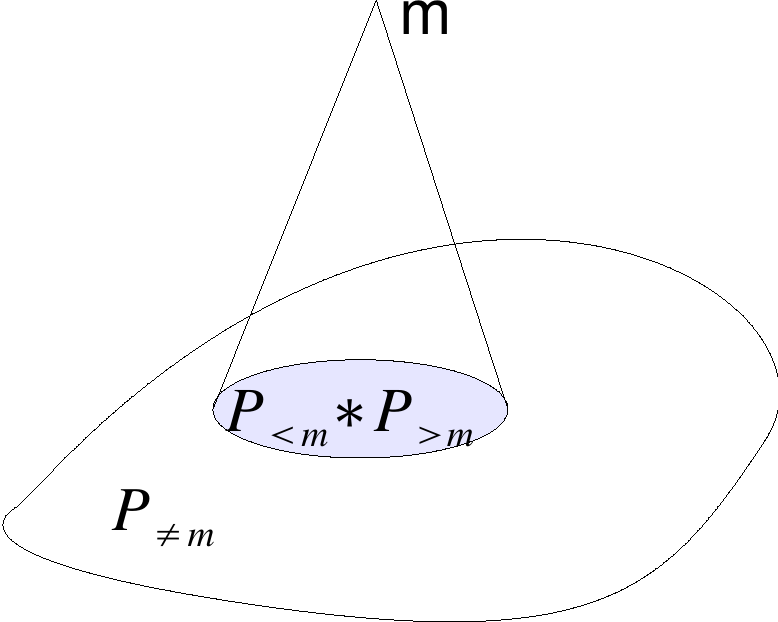}
  \end{center}
  \caption{\small Space $\Delta P$.}
  \label{P_neq_m}
\end{wrapfigure}

\ \begin{lemma}
  Let $P$ be a finite poset, $m \in P$. Let the simplicial complex $\Delta(P_{<m}\cup P_{>m})$ be contractible by $\Delta P_{\neq m}$, then
  $$
    \Delta P\cong \Delta P_{\neq m} \vee \Sigma (\Delta P_{<m}*\Delta P_{>m}). 
  $$
\end{lemma}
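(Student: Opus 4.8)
The plan is to exhibit $\Delta P$ as the mapping cone of the inclusion $i\colon \Delta(P_{<m}\cup P_{>m})\hookrightarrow \Delta P_{\neq m}$, and then to apply the standard homotopy-theoretic fact that attaching a cone along a null-homotopic map yields a wedge of the ambient space with a suspension.

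First I would examine how the vertex $m$ sits inside $\Delta P$. A simplex of $\Delta P$ is a chain of $P$; it either avoids $m$ --- and then lies in the full subcomplex $\Delta P_{\neq m}$ --- or contains $m$, in which case deleting $m$ leaves a chain whose members are all comparable to $m$, i.e. a chain in $P_{<m}\cup P_{>m}$. Hence the link of $m$ is exactly $L:=\Delta(P_{<m}\cup P_{>m})$, the star of $m$ is the cone $m\ast L$, and $\Delta P=\Delta P_{\neq m}\cup(m\ast L)$ with the two pieces meeting precisely along $L$. Since $m\ast L$ is a cone on $L$ glued to $\Delta P_{\neq m}$ along $i$, the space $\Delta P$ is homeomorphic to the mapping cone of $i$; and by the preceding Remark $L=\Delta P_{<m}\ast\Delta P_{>m}$.

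Next I would feed in the hypothesis. The phrase ``$\Delta(P_{<m}\cup P_{>m})$ is contractible by $\Delta P_{\neq m}$'' should be read as saying that the inclusion $i$ is null-homotopic inside $\Delta P_{\neq m}$. Because the inclusion of a subcomplex is a cofibration, the mapping cone depends only on the homotopy class of the attaching map, so I may replace $i$ by a constant map. For a constant attaching map the cone $CL$ is glued to $\Delta P_{\neq m}$ at a single point, which visibly gives $\Delta P_{\neq m}\vee(CL/L)=\Delta P_{\neq m}\vee\Sigma L$. Substituting $L=\Delta P_{<m}\ast\Delta P_{>m}$ yields the asserted equivalence $\Delta P\cong\Delta P_{\neq m}\vee\Sigma(\Delta P_{<m}\ast\Delta P_{>m})$.

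The main obstacle will be justifying the two geometric reductions rather than any computation: first, that $\Delta P$ is genuinely the mapping cone of $i$ (which needs the star to be an honest cone on the link and the gluing to be along the inclusion), and second, the invariance of the mapping cone under homotopies of the attaching map together with the identification $CL/L\cong\Sigma L$ and the basepoint bookkeeping that turns the collapsed constant-map cone into a literal wedge. Both facts are standard for simplicial complexes, so the only genuinely delicate point is interpreting ``contractible by $\Delta P_{\neq m}$'' as the null-homotopy of $i$.
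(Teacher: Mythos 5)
Your proposal is correct and follows essentially the same route as the paper: both decompose $\Delta P$ as $\Delta P_{\neq m}$ glued along $L=\Delta P_{<m}\ast\Delta P_{>m}$ to the cone with apex $m$, and both use the null-homotopy of the inclusion $L\hookrightarrow\Delta P_{\neq m}$ to turn that cone into a wedge summand $\Sigma L$. The only difference is presentational --- you phrase the last step via homotopy invariance of the mapping cone of a cofibration, while the paper directly describes the homotopy contracting $L$ to a point $x$ inside $\Delta P_{\neq m}$.
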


\begin{proof}
  A simplicial complex $Q_m=\Delta (P_{<m}\cup P_{>m}\cup \{m\})$ represents a cone with a point $m$ over the base $\Delta P_{<m}*\Delta P_{>m}$. The complex $\Delta P$ is a gluing of $\Delta P_{\neq m}$ and $Q_m$ by $\Delta P_{<m} * \Delta P_{>m}$ (see Figure \ref{P_neq_m}). The homotopy map, contracting $\Delta P_{<m}*\Delta P_{>m}$ by $\Delta P_{\neq m}$ to some point $x$, maps the cone $Q_m$ to a suspension $\Sigma(\Delta P_{<m} * \Delta P_{>m})$. This suspension is glued to $\Delta P_{\neq m}$ by exactly a point $x$.
\end{proof}

\begin{remark}
  If the complex $P_{\neq m}$ is not connected, then the basepoint $x$ of the wedge $x$ must belong to the same component as $\Delta P_{<m}*\Delta P_{>m}$ (because $\Delta P_{<m}*\Delta P_{>m}$ is contractible by $P_{\neq m}$, it must be contained in a single connected component).
\end{remark}

\begin{theorem}
\label{vee_sigma}
  Let $M$ be an antichain of elements of $P$. Assume that the complex $\bigcup_{m\in M}\Delta P_{<m} * \Delta P_{>m}$ is contractible by $\Delta P_{\notin M}$. Then
  \begin{equation}
  \label{vee_M_djoin}
  \Delta P\cong \Delta P_{\notin M}\vee\bigvee_{m\in M}\Sigma (\Delta P_{<m}*\Delta P_{>m}).
  \end{equation}
\end{theorem}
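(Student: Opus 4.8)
The plan is to argue by induction on $|M|$, peeling off one element of the antichain at a time and invoking the single-element lemma that immediately precedes the theorem. The base case $|M|=1$ is precisely that lemma, so I would take $|M|=k>1$ and assume the statement for antichains of size $k-1$. The one structural fact that makes the induction go through is that $M$ is an \emph{antichain}: fixing $m_0\in M$ and setting $M'=M\setminus\{m_0\}$, $P'=P_{\neq m_0}$, no element of $M'$ is comparable to $m_0$, so none of them lies in $P_{<m_0}$ or $P_{>m_0}$. Consequently $(P')_{<m'}=P_{<m'}$ and $(P')_{>m'}=P_{>m'}$ for every $m'\in M'$, the subcomplexes $\Delta P_{<m'}*\Delta P_{>m'}$ are unchanged in passing from $P$ to $P'$, and $(P')_{\notin M'}=P_{\notin M}$. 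Thus all the data appearing in the theorem for $(P',M')$ coincide with the corresponding data for $(P,M)$.

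Next I would check that both contractibility hypotheses needed below are inherited from the global one. Write $B=\bigcup_{m\in M}\Delta P_{<m}*\Delta P_{>m}$ and fix one null-homotopy $H$ contracting $B$ to a point $x$ inside $\Delta P_{\notin M}$. Restricting $H$ to the subcomplex $\Delta P_{<m_0}*\Delta P_{>m_0}\subseteq B$ shows it is contractible by $\Delta P_{\notin M}\subseteq\Delta P_{\neq m_0}$, which is exactly the hypothesis of the single-element lemma for $m_0$ in $P$, giving
$$\Delta P\cong\Delta P_{\neq m_0}\vee\Sigma(\Delta P_{<m_0}*\Delta P_{>m_0}),$$
with the wedge point taken to be $x$. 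Likewise, restricting $H$ to $\bigcup_{m'\in M'}\Delta P_{<m'}*\Delta P_{>m'}\subseteq B$ shows this union is contractible by $\Delta P_{\notin M}=\Delta(P')_{\notin M'}$, so $(P',M')$ meets the hypothesis of the theorem, and the inductive hypothesis yields
$$\Delta P_{\neq m_0}=\Delta P'\cong\Delta(P')_{\notin M'}\vee\bigvee_{m'\in M'}\Sigma(\Delta P_{<m'}*\Delta P_{>m'})=\Delta P_{\notin M}\vee\bigvee_{m'\in M'}\Sigma(\Delta P_{<m'}*\Delta P_{>m'}).$$
Substituting the second equivalence into the first reassembles the full wedge~(\ref{vee_M_djoin}).

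The step that requires genuine care — and which I expect to be the main obstacle — is the bookkeeping of base points: one must verify that the two decompositions glue into a single honest wedge rather than some iterated mapping-cone construction. Because I use the \emph{one} fixed null-homotopy $H$ landing at the \emph{single} point $x\in\Delta P_{\notin M}$, every suspension is attached along $x$, and $x$ survives in $\Delta P_{\notin M}$ at each stage of the induction; the Remark following the single-element lemma guarantees that $x$ lies in the correct connected component even when the intermediate complexes are disconnected. To make the substitution legitimate one wants the equivalence $\Delta P'\cong\Delta P_{\notin M}\vee\bigvee_{m'}\Sigma(\cdots)$ to be based at $x$, and since simplicial complexes are well-pointed this can always be arranged, so the wedges combine as claimed.
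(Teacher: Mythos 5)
Your proof is correct, but it is organized differently from the paper's. The paper proves the theorem by running the argument of the single-element lemma simultaneously for all $m\in M$: it writes $\Delta P$ as the union of $\Delta P_{\notin M}$ with the cones $Q_m=\Delta(P_{<m}\cup P_{>m}\cup\{m\})$, observes that the antichain condition forces any two cones $Q_{m_1}$, $Q_{m_2}$ to meet only in their bases (all of which lie in $\bigcup_{m\in M}\Delta P_{<m}*\Delta P_{>m}\subseteq\Delta P_{\notin M}$), and then collapses that union of bases to a point in one step, turning every cone into a suspension attached at the same point. You instead induct on $|M|$, peeling off one element at a time and invoking the single-element lemma as a black box; the antichain hypothesis enters for you not through the geometry of the cones but through the combinatorial fact that deleting $m_0$ leaves $P_{<m'}$, $P_{>m'}$ and $P_{\notin M}$ unchanged for the remaining $m'\in M'$, so the inductive data match. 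Both uses of the antichain condition are two faces of the same fact. Your route buys reuse of the already-proved lemma at the cost of the basepoint bookkeeping across the induction, which you correctly identify as the delicate point and handle properly: fixing one null-homotopy $H$ of the whole union $B$ to a single point $x$, restricting it to get both hypotheses, and noting that the equivalence produced at each stage can be taken rel $\Delta P_{\notin M}$ (so $x$ survives and all suspensions are wedged on at $x$, which matters when $\Delta P_{\notin M}$ is disconnected). The paper's simultaneous version avoids the induction entirely and arguably makes the single-wedge-point issue more transparent, but your argument is complete and valid.
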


The proof of the last theorem is similar to the proof of the preceeding lemma, so we omit it. Nevertheless, it is worth mentioning that $M$ is an antichain and thus for any $m_1$ and $m_2 \in M$ the cones $Q_{m_1}$ and $Q_{m_2}$ may intersect only by their bases.

\begin{corollary}
  If $\dim\Delta P_{<m}*\Delta P_{>m}=\dim\Delta P_{<m} + \dim\Delta P_{>m} + 1\leq k$ for all $m\in M$ and the complex $\Delta P_{\notin M}$ is $k$-connected (i.e. $\pi_0(\Delta P_{\notin M})=\ldots=\pi_k(\Delta P_{\notin M})=0$), then (\ref{vee_M_djoin}) holds.
\end{corollary}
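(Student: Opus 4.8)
The plan is to verify that the contractibility hypothesis of Theorem~\ref{vee_sigma} is automatically satisfied under the stated dimension and connectivity assumptions, and then simply to invoke that theorem. Write $A=\bigcup_{m\in M}\Delta P_{<m}*\Delta P_{>m}$ for the subcomplex in question. The first step is to check that $A$ really is a subcomplex of $\Delta P_{\notin M}$. Since $M$ is an antichain, for distinct $m,m'\in M$ the element $m'$ is incomparable with $m$, so $m'\notin P_{<m}\cup P_{>m}$; and trivially $m\notin P_{<m}\cup P_{>m}$. Hence $P_{<m}\cup P_{>m}\subseteq P_{\notin M}$ for every $m\in M$, which gives $A\subseteq\Delta P_{\notin M}$. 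Recall that ``$A$ is contractible by $\Delta P_{\notin M}$'' means precisely that the inclusion $\iota\colon A\hookrightarrow\Delta P_{\notin M}$ is null-homotopic, so that is what I would establish.

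Next I would record the two numerical facts. By the first hypothesis $\dim(\Delta P_{<m}*\Delta P_{>m})\leq k$ for every $m\in M$, and a finite union of complexes of dimension at most $k$ again has dimension at most $k$; hence $\dim A\leq k$. By the second hypothesis $\Delta P_{\notin M}$ is $k$-connected, i.e. $\pi_0(\Delta P_{\notin M})=\ldots=\pi_k(\Delta P_{\notin M})=0$.

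The heart of the argument is then the classical fact of elementary obstruction theory: \emph{any continuous map from a CW complex of dimension at most $k$ into a $k$-connected space is null-homotopic}. I would prove the instance I need by building a null-homotopy of $\iota$ cell by cell over the skeleta of $A$; equivalently, one extends $\iota$ over the cone $CA$. The obstruction to extending across a cone cell sitting over an $i$-cell of $A$ lives in $\pi_i(\Delta P_{\notin M})$, and since every cell of $A$ has dimension $i\leq k$ while $\pi_i(\Delta P_{\notin M})=0$ in that range, every obstruction vanishes and the extension exists. (Alternatively, replace $\Delta P_{\notin M}$ by a weakly equivalent CW model with a single $0$-cell and no cells in dimensions $1,\dots,k$, and apply cellular approximation to $\iota$.) This shows that $A$ is contractible by $\Delta P_{\notin M}$, so all the hypotheses of Theorem~\ref{vee_sigma} are met and~(\ref{vee_M_djoin}) follows at once.

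The only genuinely delicate point is keeping the dimension and connectivity indices aligned: one must use that the join $\Delta P_{<m}*\Delta P_{>m}$ contributes cells of dimension exactly one more than the sum of the factor dimensions (this is why the hypothesis is written with the ``$+1$''), so that the bound $\dim A\leq k$ matches the $k$-connectivity of $\Delta P_{\notin M}$ exactly. Everything else is routine, and no topological input beyond the cellular/obstruction-theoretic fact above is needed.
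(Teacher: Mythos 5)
Your proof is correct and is exactly the argument the paper intends (the corollary is stated there without proof): you verify that $\bigcup_{m\in M}\Delta P_{<m}*\Delta P_{>m}$ is a subcomplex of $\Delta P_{\notin M}$ of dimension at most $k$, and then use the standard fact that a map from a CW complex of dimension at most $k$ into a $k$-connected space is null-homotopic, which is precisely the contractibility hypothesis of Theorem~\ref{vee_sigma}. Your attention to the antichain condition (ensuring $P_{<m}\cup P_{>m}\subseteq P_{\notin M}$) and to the join dimension formula is exactly the right bookkeeping.
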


\begin{lemma}\label{rubanok}
  Let $M$ be a set of some (possibly not all) maximal elements of $P$. Assume that the complex $\bigcup_{m\in M}\Delta P_{<m}$ is contractible by $\Delta P_{\notin M}$, then 
  \begin{equation}
    \label{vee_M} 
    \Delta P\cong \Delta P_{\notin M}\vee\bigvee_{m\in M}\Sigma\Delta P_{<m}.
  \end{equation}
\end{lemma}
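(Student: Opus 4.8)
The plan is to recognize Lemma \ref{rubanok} as the specialization of Theorem \ref{vee_sigma} to the case in which every element of $M$ is maximal, so that no genuinely new argument is required. First I would observe that a set $M$ of maximal elements of $P$ is automatically an antichain: if $m_1,m_2\in M$ with $m_1\leq m_2$, then maximality of $m_1$ forces $m_1=m_2$, so distinct elements of $M$ are incomparable. This is exactly what is needed to apply Theorem \ref{vee_sigma}, whose only standing assumption on $M$ is that it be an antichain.

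Next I would simplify both the hypothesis and the conclusion of Theorem \ref{vee_sigma} using maximality. For a maximal element $m$ we have $P_{>m}=\emptyset$, hence $\Delta P_{>m}$ is the void complex. Under the join convention used in the Remark preceding Theorem \ref{vee_sigma} --- where a chain in $P_{<m}\cup P_{>m}$ splits as a (possibly empty) chain in $P_{<m}$ together with a (possibly empty) chain in $P_{>m}$ --- taking the $P_{>m}$-part to be empty gives $\Delta P_{<m}\ast\Delta P_{>m}=\Delta P_{<m}\ast\emptyset=\Delta P_{<m}$. Substituting this identity everywhere, the contractibility hypothesis of Theorem \ref{vee_sigma}, namely that $\bigcup_{m\in M}\Delta P_{<m}\ast\Delta P_{>m}$ be contractible by $\Delta P_{\notin M}$, becomes verbatim the hypothesis of Lemma \ref{rubanok}; and on the right-hand side the suspended joins $\Sigma(\Delta P_{<m}\ast\Delta P_{>m})$ collapse to $\Sigma\Delta P_{<m}$, yielding exactly the decomposition (\ref{vee_M}).

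The one point I expect to require care --- and which I would state explicitly rather than gloss over --- is the identity $X\ast\emptyset=X$ for the join with the void complex, since the quotient-space formula for $\ast$ degenerates when one factor is empty. I would justify it directly at the combinatorial level, reusing the argument of the Remark: the simplices of $\Delta P_{<m}\ast\Delta P_{>m}$ are the unions of a chain in $P_{<m}$ with a chain in $P_{>m}$, and when $P_{>m}=\emptyset$ the only admissible chain there is the empty one, so these are precisely the chains of $P_{<m}$, i.e. the simplices of $\Delta P_{<m}$. Phrasing the identity this way fixes the convention unambiguously, and with it in hand Lemma \ref{rubanok} is an immediate corollary of Theorem \ref{vee_sigma}; there is no remaining topological obstacle to overcome.
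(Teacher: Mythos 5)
Your proposal is correct and follows exactly the paper's own argument: observe that a set of maximal elements is automatically an antichain, apply Theorem \ref{vee_sigma}, and use the identity $X\ast\emptyset=X$ to collapse the joins $\Delta P_{<m}\ast\Delta P_{>m}$ to $\Delta P_{<m}$. Your explicit combinatorial justification of that identity is a welcome addition, since the paper merely mentions it without proof.
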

\begin{proof}
  Any two maximal elements in a poset $P$ are incomparable. Hence, any subset $M\subseteq P$ consisting of maximal elements is an antichain. This implies that we can use Theorem \ref{vee_sigma}. It is only left to mention that $X*\emptyset=X$ for any topological space $X$.
\end{proof}

Note that if $P$ is a finite lattice and $M$ is a set of some maximal elements of the proper part of $P$, then $P_{\notin M}$ is also a lattice. Thus, Lemma \ref{rubanok} is likely to give good results together with Lemma \ref{Coatoms}: we leave only the intersections of maximal elements, then we delete some maximal elements, then we apply Lemma \ref{Coatoms} again etc. Note, that at any moment we can do the same for minimal elements.

Combining the homotopy methods described above we will be able to determine the exact homotopy type of $\Sub PSL(2,7)$.

\section{Wedge of Spheres of Different Dimensions}

Shareshian made a conjecture (see \cite{ShaHypothesis}) that for any finite group $G$ the simplicial complexes $\Delta\Sub G$ and $\Delta\Co G$ are homotopy equivalent to wedges of spheres of possibly different dimensions. However, it is not even known if homologies in some dimension are torsion-free for arbitrary finite group.

Up to now the attention was focused mainly to minimal simple groups with $\Sub G$ homotopy equivalent to a wedge of circles. We shall consider a minimal simple group $PSL(2,7)$ and prove that the proper part of its subgroup lattice is a wedge of spheres of two different dimensions.

\begin{figure}
  \begin{center}
    \includegraphics[width=16cm]{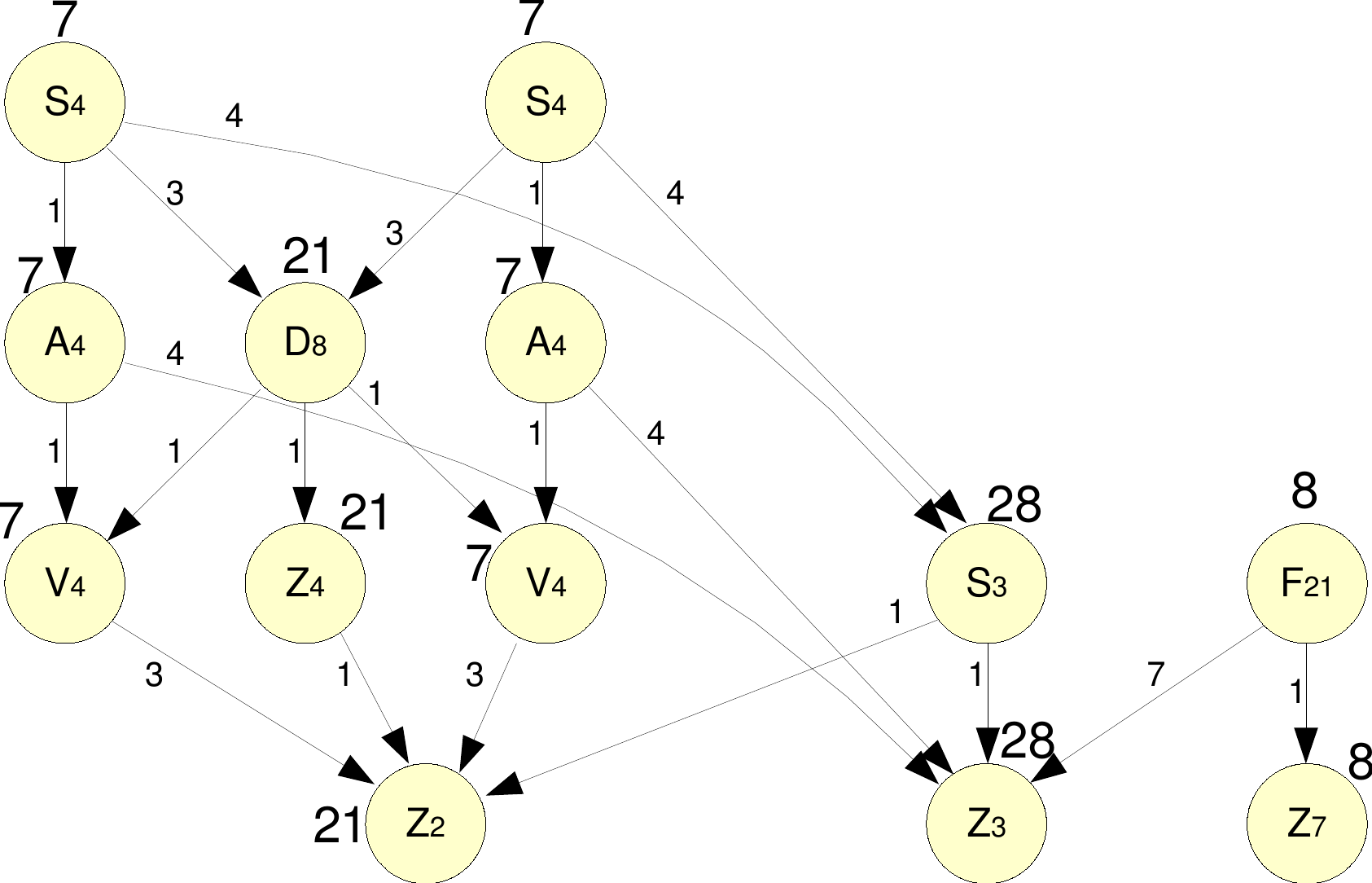}
  \end{center}
  \caption{\small Subgroup lattice of $PSL(2,7)$.}
  \label{PSL27}
\end{figure}

The subgroup lattice of $PSL(2,7)$ is depicted in Figure \ref{PSL27} (see \cite{Dickson}). Each vertex corresponds with a conjugacy class, the cardinality of the class is represented by a number next to a vertex.

If two conjugacy classes are connected by an arrow that means that a group from the upper class contains subgroups from the lower class. The number of such subgroups is equal to a small number next to an arrow.

Figure \ref{PSL27} does not contain all possible arrows: if $H_1 < H_2 < H_3$, then we omit the arrow $H_3\to H_1$ and draw $H_3\to H_2$ and $H_2\to H_1$.

By Lemma \ref{Coatoms} one can consider a smaller poset $Q$ of all nontrivial intersections of maximals subgroups (i.e. conjugacy classes of $A_4$, $\mathbb Z_4$ and $\mathbb Z_7$ are omitted).

Suppose that $\mathcal{M}$ is a set of all subgroups of type $F_{21}$. Each elements of $\mathcal M$ is maximal in $Q$, hence the complex $\Delta Q_{\notin\mathcal M}$ is connected. Note that for any $m\in\mathcal M$ the complex $\Delta Q_{<m}\cong \bigvee\limits^6{S_0}$ (i.e. it consists of $7$ points) and by Lemma \ref{rubanok} and its corollaries we conclude:
$$
  \Delta Q\cong \Delta Q_{\notin\mathcal M}\vee \bigvee^8\Sigma\bigvee^6 S_0=\Delta Q_{\notin\mathcal M}\vee \bigvee^{48} S^1.
$$
Thus, we have isolated a wedge of $48$ circles. Denote $Q_{\notin\mathcal M}$ by $Q^I$. From the fact that $\widetilde \chi(\Sub PSL(2,7))=0$ (see \cite{Hall}), we conclude that $\widetilde \chi(Q^{I})=48$ (as we removed a wedge of $48$ $S^1$). We will show that $\Delta Q^{I}$ is homotopy equivalent to a wedge of $48$ spheres.

For each subgroup in $S_4$ we consider $\Delta Q^I_{<S_4}$. This is a connected 1-dimensional complex with reduced Euler characteristics equal to $-8$ as $\Delta Q'_{<S_4}$ consists of exactly $17$ vertices and $24$ edges. Consequently, $\Delta Q^I_{<S_4}\cong\bigvee^8 S^1$. We delete $6$ vertices  from the right conjugacy class of $S_4$ and show that the resulting poset $Q^{II}$ is contractible. By Theorem \ref{vee_sigma} this means that
$$
  \Delta Q^I\cong \Delta Q^{II}\vee\bigvee^6\Sigma\bigvee^8 S_1 \cong pt\vee\bigvee^{48}S^2 \cong\bigvee^{48}S^2.
$$
\ Again we use Lemma \ref{rubanok} to consider the poset of all nontrivial intersections of maximal subgroups instead of $Q^{II}$ (note that $Q^{II}$ is still a proper part of some lattice, it is depicted in Figure \ref{END}). It is well-known that $PSL_2(7) \cong GL_3(2)$ and thus admits a natural group action on a 3-dimensional vector space over $\mathbb Z_2$.

\begin{wrapfigure}{r}{5.5cm}
  \begin{center}
    \includegraphics[width=5cm]{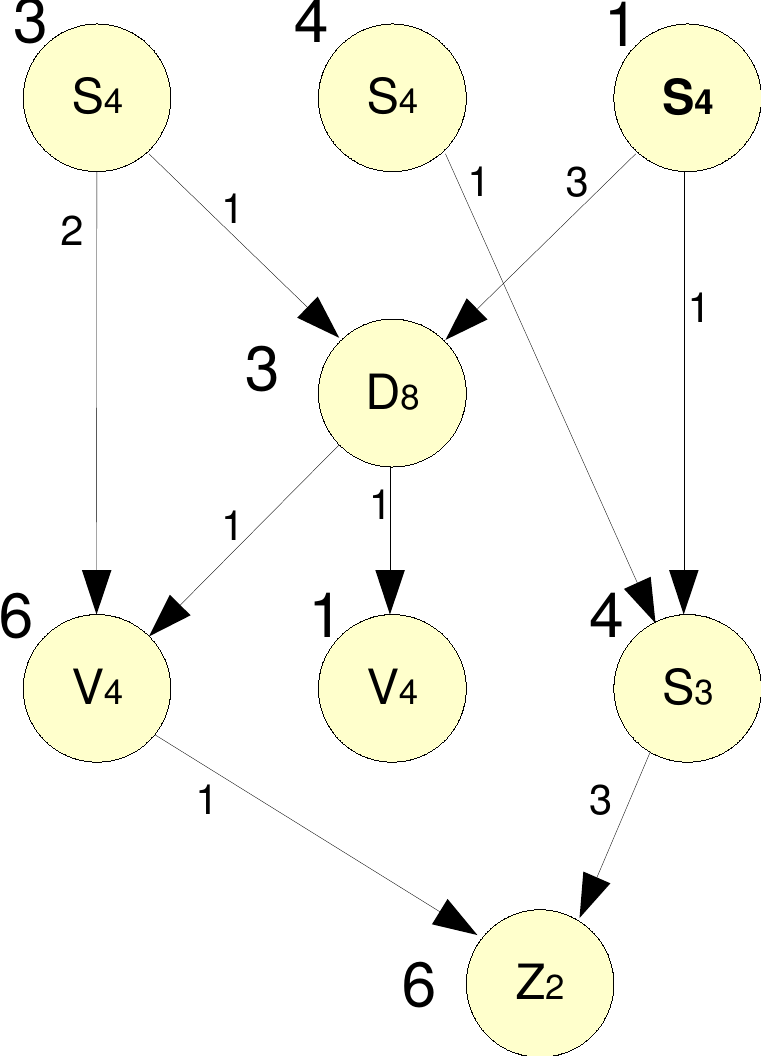}
  \end{center}
  \caption{\small Intersection of maximal elements in $Q^{II}$.}
  \label{END}
\end{wrapfigure}

Without loss of generality it can be assumed that the left conjugacy class of $S_4$ consists of the stabilizers of non-zero vectors. Any pair of non-zero vectors $u\neq v$ can be extended to some basis $(u,v,w)$. The group acts transitively on the set of all bases, hence $St(u,v)$ is exactly the subgroup of operators mapping $(u,v,w)$ to any basis $(u,v,w')$ isomorphic to $V_4$. The stabilizer of any 3 vectors is obviously trivial.

Denote the only element of the right class of $S_4$ by $S$. Suppose that some subgroup $H$ in $Q^{II}$ is not contained in $S$, then it is either $S_4$ (vector stabilizer) or $V_4$ (stabilizer of two vectors). Again without loss of generality we assume that $S$ (as a line stabilizer) consists of all invertible matrices of the form
$$
\begin{pmatrix}
1 & 0 & 0 \\ 
* & * & * \\ 
* & * & *
\end{pmatrix}
$$ 
One can easily check that the set of such matrices contains a nontrivial stabilizer of any two non-zero vectors.

In fact, we proved that any subgroup in $Q^{II}$ intersects with $S$ nontrivially. Consequently, $S^\perp$ is empty and by Homotopy Complementation Formula we conclude that $Q^{II}$ is contractible.

Thus, we proved the following theorem on the homotopy type of the subgroup lattice of $PSL(2,7)$:

\begin{theorem}
  Simplicial complex of the subgroup lattice of $PSL(2,7)$ is homotopy equivalent to a wedge of $48$ circles and $48$ spheres:
  $$
    \Delta\Sub PSL(2,7) \cong \bigvee^{48}S^1 \vee \bigvee^{48}S^2.
  $$
\end{theorem}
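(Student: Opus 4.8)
The plan is to strip the proper part of the subgroup lattice down to a contractible core while recording, stage by stage, the wedge summands that split off. First I would apply Lemma \ref{Coatoms} to replace $\Sub PSL(2,7)$ by the homotopy equivalent poset $Q$ of all nontrivial intersections of maximal subgroups; reading off Figure \ref{PSL27}, this discards precisely the conjugacy classes of $A_4$, $\mathbb{Z}_4$ and $\mathbb{Z}_7$, since none of these arises as a meet of coatoms. Next I would peel off the eight subgroups of type $F_{21}$, which form a set $\mathcal{M}$ of maximal elements of $Q$. For each $m\in\mathcal{M}$ the lower interval $\Delta Q_{<m}$ is just the seven proper nontrivial subgroups of $F_{21}$ with no comparabilities among them, so $\Delta Q_{<m}\cong\bigvee^{6}S^0$. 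After checking the contractibility hypothesis of Lemma \ref{rubanok} (the union $\bigcup_{m\in\mathcal M}\Delta Q_{<m}$ collapses inside the connected complex $\Delta Q_{\notin\mathcal M}$), the lemma yields
$$
  \Delta Q\cong \Delta Q_{\notin\mathcal M}\vee\bigvee^{8}\Sigma\bigvee^{6}S^0=\Delta Q^{I}\vee\bigvee^{48}S^1,
$$
isolating the $48$ circles and leaving $Q^{I}:=Q_{\notin\mathcal M}$.

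Second, I would analyze $Q^{I}$, whose reduced Euler characteristic must equal $48$ by the bookkeeping $\widetilde\chi(\Sub PSL(2,7))=0$ together with the removed $\bigvee^{48}S^1$; this is the numerical target telling me to expect $48$ spheres of dimension two. Here I would use the two conjugacy classes of $S_4$. For a subgroup $S_4$ the interval $\Delta Q^{I}_{<S_4}$ is a connected graph on $17$ vertices with $24$ edges, hence $\widetilde\chi=-8$ and $\Delta Q^{I}_{<S_4}\cong\bigvee^{8}S^1$. Deleting the six vertices of the right $S_4$ class produces a poset $Q^{II}$, and after verifying that the relevant join complexes collapse into $\Delta Q^{II}$, Theorem \ref{vee_sigma} (with the upper intervals empty, so joins reduce to the lower intervals) gives
$$
  \Delta Q^{I}\cong \Delta Q^{II}\vee\bigvee^{6}\Sigma\bigvee^{8}S^1.
$$
Provided $Q^{II}$ turns out contractible, the right-hand side simplifies to $\bigvee^{48}S^2$, completing the extraction of the two-spheres.

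The main obstacle, and the crux of the whole argument, is establishing that $Q^{II}$ is contractible. I would first reduce once more by Lemma \ref{Coatoms} to the poset of nontrivial intersections of its maximal subgroups (Figure \ref{END}), then exploit the exceptional isomorphism $PSL(2,7)\cong GL_3(2)$ and the induced action on $V=\mathbb{F}_2^3$. Identifying the left $S_4$ class with vector stabilizers and the $V_4$ subgroups with stabilizers $St(u,v)$ of pairs of vectors, I would let $S$ be the single surviving element of the right $S_4$ class, realized as a line (flag) stabilizer of the explicit matrix form given. The key calculation is that every subgroup remaining in $Q^{II}$ meets $S$ nontrivially: any such subgroup is either contained in $S$, or is a vector or two-vector stabilizer, and the matrices fixing the distinguished line contain a nontrivial element of every two-vector stabilizer. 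This forces the complement set $S^{\perp}=\{x\mid x\wedge S=\0,\ x\vee S=\1\}$ to be empty, whereupon part~1 of the Homotopy Complementation Formula (with $z^\perp=\emptyset$) yields that $\overline{L}=Q^{II}$ is contractible. I anticipate this group-theoretic verification to be the delicate step, since it requires pinning down the $GL_3(2)$-geometry precisely enough to rule out a single complement.

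Assembling the three stages, the circles split off first, the two-spheres split off from the residual $Q^{I}$, and the final core $Q^{II}$ contributes only a point to the wedge; hence
$$
  \Delta\Sub PSL(2,7)\cong\bigvee^{48}S^1\vee\bigvee^{48}S^2,
$$
as claimed. I would close by remarking that the Euler-characteristic computations at each stage are mutually consistent, which serves as an arithmetic sanity check on the decomposition.
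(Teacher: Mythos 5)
Your proposal reproduces the paper's argument essentially step for step: the same reduction to the poset $Q$ of intersections of maximal subgroups, the same splitting off of $48$ circles via the $F_{21}$ class and Lemma \ref{rubanok}, the same extraction of $48$ two-spheres from the right $S_4$ class via Theorem \ref{vee_sigma}, and the same proof that $Q^{II}$ is contractible using the $GL_3(2)$-action and the emptiness of $S^\perp$ in the Homotopy Complementation Formula. The approach and all key computations match the paper's proof.
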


\section{Spectral Sequence of Posets}

If we consider groups $PSL(2,7)\times PSL(2,7)\times\ldots\times PSL(2,7)$ or $PSL(2,7)\times(\mathbb Z_2)^n$, it becomes evident that the subgroup lattice of a finite group can be a wedge of spheres of any given number of dimensions and thus its Euler characteristics cannot give the complete information on the homotopy type of this lattice.

We shall use the standard tool of studying the homologies of a topological space~--- a spectral sequence. Let $P$ be a finite poset, consider a natural filtration on $P$:

$$
  \begin{array}{l}
    P^0=P^{\leq 0}=\{h\in P\ |\ h \mbox{ is minimal }\}\subseteq P;\\
    P^{\leq 1}=\{h\in P\ |\ \forall x\mbox{, if }x<h\mbox{, then } x\in P^0\}\subseteq P;\\
    P^{\leq 2}=\{h\in P\ |\ \forall x\mbox{, if }x<h\mbox{, then } x\in P^{\leq 1}\}\subseteq P;\\
    \ldots
  \end{array}
$$

That is, $h\in P^{\leq k}$ exactly when the maximal length of a chain $x_0<x_1<\ldots<x_{k-1}<h$, $x_i\in P$, is $k$. Obviously,
$$
  P^0\subseteq P^{\leq 1}\subseteq P^{\leq 2}\subseteq\ldots\subseteq P^{\leq n}=P.
$$

Thus the maximal length of the chain in $P$ is $n$: $x_0<x_1<\ldots<x_n$ and $\dim \Delta P=n$. We define
$$
  P^k=P^{\leq k}\setminus P^{\leq k-1},\quad\forall k\geq 1.
$$

We call each set $P^k$ a level (namely, the $k$-th level). Note, that if $x<y$ and $y\in P^k$, then $k\neq 0$ and $x\in P^{\leq k-1}$. Now we are ready to proof the main lemma of this section:

\begin{lemma}
\label{nadstrojka1}
  Assume that $h\in P^{k+1}$, $k\geq 0$ and $X_h=\Delta(P^{\leq k}\cup\{h\})$. Then the quotient space $X_h/\Delta P^{\leq k}$ coincides with a suspension over $\Delta P_{<h}$:
  $$
    X_h/\Delta P^{\leq k} = \Sigma\Delta P_{<h}.
  $$
\end{lemma}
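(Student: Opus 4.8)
The plan is to exhibit $X_h$ as a union of two subcomplexes that overlap cleanly along $\Delta P_{<h}$, and then to read off the quotient directly as a cone with its base crushed to a point, i.e. a suspension. The whole computation is combinatorial plus one standard topological identification, so the claim can be obtained as an honest homeomorphism rather than merely a homotopy equivalence.

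First I would record two structural facts about $h$ inside the subposet $P^{\leq k}\cup\{h\}$. Since $h\in P^{k+1}$, every $x<h$ already lies in $P^{\leq k}$, so $P_{<h}\subseteq P^{\leq k}$. Conversely $h$ is maximal in $P^{\leq k}\cup\{h\}$: if $h<y$, then prepending a longest chain below $h$ produces a chain below $y$ of length at least $k+2$, so $y\notin P^{\leq k}$. Together these say that every chain of $P^{\leq k}\cup\{h\}$ containing $h$ has $h$ as its top element and all remaining members in $P_{<h}$.

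Next I would split the simplices of $X_h$ accordingly. The simplices avoiding $h$ are exactly the chains in $P^{\leq k}$, giving $\Delta P^{\leq k}$; the simplices containing $h$ are exactly the chains $h_1<\cdots<h_j<h$ with $h_i\in P_{<h}$, i.e. the result of adjoining $h$ to each simplex of $\Delta P_{<h}$ (the empty one included). The latter assemble into the cone $C_h:=\Delta(P_{<h}\cup\{h\})$ with apex $h$ over the base $\Delta P_{<h}$. Hence
$$
  X_h=\Delta P^{\leq k}\cup C_h,\qquad \Delta P^{\leq k}\cap C_h=\Delta P_{<h},
$$
where the intersection is $\Delta P_{<h}$ precisely because $h\notin P^{\leq k}$ while $P_{<h}\subseteq P^{\leq k}$. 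Then I would compute the quotient: collapsing $\Delta P^{\leq k}$ to a point touches $C_h$ only along the overlap $\Delta P_{<h}$, so on $C_h$ it has the sole effect of crushing the base, while the rest of $\Delta P^{\leq k}$ contributes nothing but the single collapse point. Writing $C_h=(\Delta P_{<h}\times[0,1])/(\Delta P_{<h}\times\{1\})$ with apex $h$ and base $\Delta P_{<h}\times\{0\}$, collapsing the base to a point yields exactly $\Sigma\Delta P_{<h}$, whose two cone points are $h$ and the image of $\Delta P^{\leq k}$.

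The step that needs the most care is the maximality of $h$ in $P^{\leq k}\cup\{h\}$, since it is exactly what forbids any simplex running \emph{through} $h$ and thus guarantees that the $h$-simplices form an honest cone $C_h$; this is the only place where the level structure of the filtration (as opposed to an arbitrary $h$) is genuinely used. Everything after that is the routine identification of a cone-with-base-collapsed as a suspension, and the equality in the statement is literal because the gluing along $\Delta P_{<h}$ is clean.
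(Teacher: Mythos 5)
Your proof is correct and follows essentially the same route as the paper: both decompose $X_h$ as the union of $\Delta P^{\leq k}$ and the cone $\Delta P_{\leq h}$ meeting along the base $\Delta P_{<h}$, and then identify the quotient of a cone by its base with a suspension. The only difference is that you make explicit the maximality of $h$ in $P^{\leq k}\cup\{h\}$ (which the paper leaves implicit), a worthwhile clarification but not a new idea.
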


\begin{wrapfigure}{r}{5.5cm}
  \begin{center}
    \includegraphics[width=5cm]{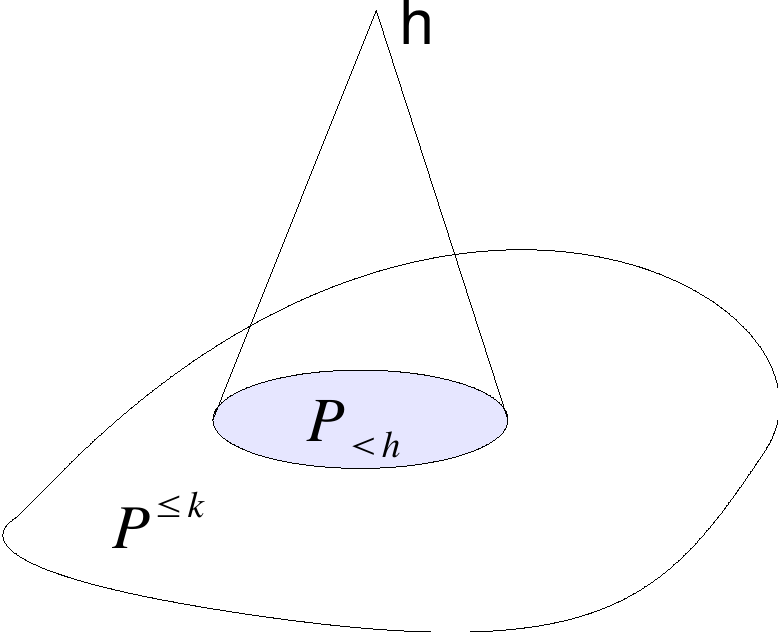}
  \end{center}
  \caption{\small Space $X_h$.}
  \label{X_h}
\end{wrapfigure}

\ \begin{proof}
  Consider a topological space $\Delta P_{\leq h}$. It is obviously a cone over $\Delta P_{<h}\neq\emptyset$. Furthermore, as $P_{<h}\subseteq P^{\leq k}$, the base of the cone $\Delta P_{<h}$ lies in $\Delta P^{\leq k}$.

  Space $X_h$ represents a union of spaces $\Delta P^{\leq k}$ and $\Delta P_{\leq h}$ intersecting by the base of the cone $\Delta P_{<h}$ (see Figure \ref{X_h}):
  $$
    X_h=\Delta P^{\leq k} \cup \Delta P_{\leq h} \quad\mbox{ and }\quad
    \Delta P^{\leq k} \cap \Delta P_{\leq h}=\Delta P_{<h}.
  $$
  Consequently,
  $$
    X_h/\Delta P^{\leq k} = \Delta P_{\leq h}/\Delta P_{<h} = \Sigma P_{<h}.
  $$
\end{proof}

The last lemma shows a strong connection between $\Delta P$ and all of its subspaces $\Delta P_{<h}$. Now we need the following theorem generalizing Lemma \ref{nadstrojka1}:

\begin{theorem}
\label{Buket_nadstrojka}
  Quotient space $\Delta P^{\leq k+1}/\Delta P^{\leq k}$ is a wedge of suspensions over $\Delta P_{<h}$ indexed by all $h\in P^{k+1}$:
  $$
    \Delta P^{\leq k+1}/\Delta P^{\leq k} = \bigvee_{h\in P_{k+1}}\Sigma\Delta P_{<h}.
  $$
\end{theorem}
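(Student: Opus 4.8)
The plan is to realize $\Delta P^{\leq k+1}$ as the union of $\Delta P^{\leq k}$ with the family of cones $\Delta P_{\leq h}$, $h\in P^{k+1}$, to show that these cones are glued to $\Delta P^{\leq k}$ only along their bases and are otherwise mutually disjoint, and then to let Lemma \ref{nadstrojka1} convert each cone into a suspension while the disjointness forces the quotient to be a wedge.

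First I would record that $P^{k+1}$ is an antichain. Indeed, if $h_1<h_2$ with $h_1,h_2\in P^{k+1}$, then a chain of maximal length terminating in $h_1$ could be prolonged by $h_2$, which would place $h_2$ in level $\geq k+2$, contradicting $h_2\in P^{k+1}$. In particular every $h\in P^{k+1}$ is maximal in $P^{\leq k+1}$, since any strictly larger element would sit in a strictly higher level.

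Next I would decompose the complex. A simplex of $\Delta P^{\leq k+1}$ is a chain of $P^{\leq k+1}$; as $P^{k+1}$ is an antichain, such a chain contains at most one vertex $h$ from $P^{k+1}$, and if it does, then all remaining vertices lie in $P_{<h}\subseteq P^{\leq k}$. Hence every simplex lies either in $\Delta P^{\leq k}$ or in some cone $\Delta P_{\leq h}$, giving
$$
  \Delta P^{\leq k+1}=\Delta P^{\leq k}\cup\bigcup_{h\in P^{k+1}}\Delta P_{\leq h}.
$$
Moreover, for $h_1\neq h_2$ no simplex contains both $h_1$ and $h_2$ (they are incomparable), so $\Delta P_{\leq h_1}\cap\Delta P_{\leq h_2}\subseteq\Delta P^{\leq k}$, while $\Delta P_{\leq h}\cap\Delta P^{\leq k}=\Delta P_{<h}$ exactly as in Lemma \ref{nadstrojka1}.

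Finally I would invoke the elementary gluing principle: if a space $X$ is written as $A\cup\bigcup_i B_i$ with $B_i\cap B_j\subseteq A$ for all $i\neq j$, then collapsing $A$ sends every intersection $B_i\cap A$ to a single basepoint, leaves the pieces $B_i\setminus A$ pairwise disjoint, and yields $X/A=\bigvee_i B_i/(B_i\cap A)$. Applied with $A=\Delta P^{\leq k}$ and $B_h=\Delta P_{\leq h}$, this gives
$$
  \Delta P^{\leq k+1}/\Delta P^{\leq k}=\bigvee_{h\in P^{k+1}}\Delta P_{\leq h}/\Delta P_{<h}=\bigvee_{h\in P^{k+1}}\Sigma\Delta P_{<h},
$$
the last equality being Lemma \ref{nadstrojka1}. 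The main point to get right is the disjointness of the cones away from the base, i.e. that distinct $\Delta P_{\leq h}$ meet only inside $\Delta P^{\leq k}$; once the antichain property of $P^{k+1}$ secures this, the rest is a direct generalization of the single-vertex case already handled.
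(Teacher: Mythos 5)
Your proposal is correct and follows essentially the same route as the paper: decompose $\Delta P^{\leq k+1}$ as $\Delta P^{\leq k}$ together with the cones $\Delta P_{\leq h}$ for $h\in P^{k+1}$, observe that distinct cones meet only inside $\Delta P^{\leq k}$, and apply Lemma \ref{nadstrojka1} to each cone after collapsing. Your explicit verification that $P^{k+1}$ is an antichain and your statement of the gluing principle only make precise what the paper leaves implicit.
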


\begin{wrapfigure}{r}{5.5cm}
  \begin{center}
    \includegraphics[width=5cm]{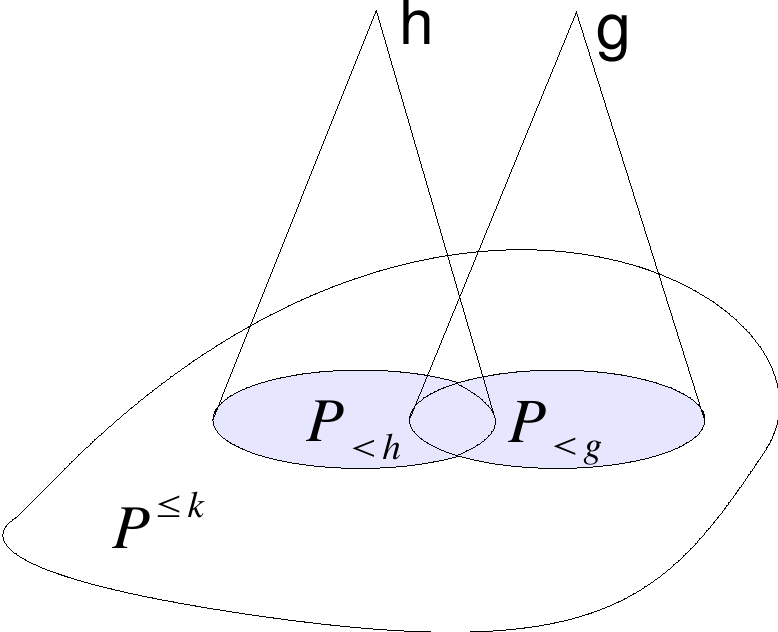}
  \end{center}
  \caption{\small Space $P^{\leq k+1}$.}
  \label{X_hg}
\end{wrapfigure}

\ \begin{proof}
  We have $P^{\leq k+1}=P^{k+1}\cup P^{\leq k}$, futhermore, assume that $x$ and $y$ are in $P^{\leq k+1}$ and $x<y$, then $x\in P^{\leq k}$ and either $y\in P^{k+1}$ or $y\in P^{\leq k}$. Consequently, the space $\Delta P^{\leq k+1}$ is a union of $\Delta P^{\leq k}$  and the cones $\Delta P_{\leq h}$ index by all $h\in P^{k+1}$ (see Figure \ref{X_hg}). Assume $h_1\neq h_2$ and $h_1,h_2\in P^{k+1}$ then the intersection of posets $P_{\leq h_1}\cap P_{\leq h_2}$ coincides with $P_{<h_1}\cap P_{<h_2}\subseteq P^{\leq k}$. Hence, the cones $\Delta P_{\leq h_1}$ and $\Delta P_{\leq h_2}$ may intersect only by their bases, but their bases lie in a space $\Delta P^{\leq k}$. Thus,
  $$
    \Delta P^{\leq k+1}/\Delta P^{\leq k} = 
    \left(\bigcup_{h\in P^{k+1}}X_h\right)/\Delta P^{\leq k} = 
    \bigvee_{h\in P^{k+1}}\left(X_h/\Delta P^{\leq k}\right).
  $$
  By Lemma \ref{nadstrojka1} the last space is a wedge of suspensions over $\Delta P_{<h}$ at the vary of all $h\in P_{k+1}$.
\end{proof}

Note that for any topological spaces $X$ and $Y$ the reduced homologies have the following properties: $\widetilde H_m(\Sigma X)=\widetilde H_{m-1}(X)$ and $\widetilde H_m(X\vee Y)=\widetilde H_m(X)\oplus\widetilde H_m(Y)$ (see \cite{FomenkoFuks}).

The filtration of $P$ defined above induces a natural filtration on $\Delta P$:
$$
  \Delta P^0 \subseteq\Delta P^{\leq 1}\subseteq\Delta P^{\leq 2}\subseteq\ldots\subseteq\Delta P^{\leq n}=\Delta P.
$$

The structure of $\Delta P^{\leq k+1}/\Delta P^{\leq k}$ is determined, so we are ready to write down the spectral sequence for that filtration. First, we describe $E^1$ (we assume that $\widetilde H_m(X)=0$ for all $m<0$ and $X\neq\emptyset$):

\begin{equation}
\label{E_k_l_r}
  \begin{array}{c}
    E^1_{0,0}=H_0(\Delta P^0)=\mathbb Z^{|P_0|},\quad E^1_{0,l}=0\mbox{, for all }l\neq 1\\
    E^1_{k,l}=\widetilde H_{k+l}(\Delta P^{\leq k}/\Delta P^{\leq k-1})=\bigoplus\limits_{h\in P^k}\widetilde H_{k+l-1}(\Delta P_{<h})\mbox{, for all }l\mbox{ and }k\geq 1.\\
  \end{array}
\end{equation}

Now we should say some words about $d^r_{k,l}$ (the arrows): they act from $E^r_{k,l}$ into $E^r_{k-r,l+r-1}$. Thus $E^{r+1}_{k,l}$ is a quotient of the kernel of $d^r_{k,l}$ from $E^r_{k,l}$ by the image of $d^r_{k+r,l-r+1}$ in $E^r_{k,l}$ (in a spectral sequence the image always lies in a kernel):
$$
  E^{r+1}_{k,l}=\ker d^r_{k,l} / \im d^r_{k+r,l-r+1}.
$$

Next, from the inductive construction of $E^{r+1}$ we see that
\begin{enumerate}
  \item If $E^1_{k,l}=0$ for some $k$ and $l$, then for all $r\geq 1$ we have $E^r_{k,l}=0$.
  \item For all $k$, $l$ and $r\geq 0$ we have $\dim E^{r+1}_{k,l}\leq \dim E^r_{k,l}\leq\ldots\leq \dim E^1_{k,l}$ (by $\dim$ we will mean the torsion-free rank of an abelian group).
  \item All the arrows $d^r_{k,l}$ starting from the diagonal $k+l=m$ point to cells on a diagonal $k+l=m-1$ regardless of $r$, and vice versa, the arrows ending on a diagonal $k+l=m$ start from the cells on a diagonal $k+l=m+1$ regardless of $r$. Thus we have
  $$
    \sum_{k+l=m}\dim E^r_{k,l}\geq\sum_{k+l=m} \dim E^1_{k,m} - \sum_{k+l=m+1} \dim E^1_{k,m} - \sum_{k+l=m-1} \dim E^1_{k,m}.
  $$
\end{enumerate}

From the dimension argument we conclude that spectral sequence stabilizes on $(n+1)$th step: $E^{n+1}=E^{n+2}=\ldots=E^\infty$. $E^\infty$ can be used to determine the homologies of $\Delta P^{\leq n}=\Delta P$ from the diagonal $k+l=m$ (for fixed $m$): denote all nontrivial groups on this diagonal starting from the top as $G_1$, $G_2$, $\ldots$, $G_s$. Then $G_1$ is a subgroup in $H_m(\Delta P)$; $G_2$ is a subgroup in $H_m(\Delta P)/G_1$, $G_3$ is a subgroup in $(H_m(\Delta P)/G_1)/G_2$ etc. The last group $G_s$ coincides with the considered quotient group (see \cite{FomenkoFuks}).

Using the fact that every diagonal $k+l=m$ in $E^1$ contains sums of groups of type $\widetilde H_m(\Delta P_{<h})$ (except for the 0th column $E^1_{0,l}$, where the only non-zero cell is $E^1_{0,0}$), it is possible to formulate the following theorems (we assume that $\widetilde H_{-1}(\emptyset)=\mathbb Z$ and $\widetilde H_m(\emptyset)=0$ for any $m\neq -1$ ):

\begin{theorem}
\label{H_m_leq}
  For any $m\geq 0$ we have the following estimate for the Betti numbers of the poset $P$ (i.e. the ranks of its homology groups):
  $$
    \dim H_m(\Delta P)\leq \sum_{h\in P}\dim\widetilde H_{m-1}(\Delta P_{<h}).
  $$
\end{theorem}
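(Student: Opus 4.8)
The plan is to read the bound directly off the spectral sequence assembled above, which converges to $H_*(\Delta P)$. The entire argument rests on two facts already in hand: that the associated graded of the filtration on $H_m(\Delta P)$ is captured by the diagonal $k+l=m$ of $E^\infty$, and that torsion-free ranks never increase as one passes from $E^1$ to $E^\infty$. So the strategy is simply to bound $E^\infty$ termwise by $E^1$ along a fixed diagonal and then recognize the resulting sum.

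First I would use the convergence of the spectral sequence. As noted after (\ref{E_k_l_r}), for fixed $m$ the groups $E^\infty_{k,l}$ with $k+l=m$ are the successive subquotients of a filtration of $H_m(\Delta P)$ (this filtration is finite, since the sequence stabilizes at step $n+1$). Because the torsion-free rank is additive along short exact sequences of finitely generated abelian groups, this yields the exact identity
$$
  \dim H_m(\Delta P)=\sum_{k+l=m}\dim E^\infty_{k,l}.
$$
Then, invoking property 2 of the pages $E^r$ (that $\dim E^{r+1}_{k,l}\leq\dim E^1_{k,l}$ for all $r$), I would replace each $E^\infty_{k,l}$ by its $E^1$ counterpart to get
$$
  \dim H_m(\Delta P)\leq\sum_{k+l=m}\dim E^1_{k,l}.
$$

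It then remains to identify this diagonal sum with $\sum_{h\in P}\dim\widetilde H_{m-1}(\Delta P_{<h})$. For $k\geq 1$, formula (\ref{E_k_l_r}) gives $E^1_{k,l}=\bigoplus_{h\in P^k}\widetilde H_{k+l-1}(\Delta P_{<h})$, which on the diagonal $k+l=m$ becomes $\bigoplus_{h\in P^k}\widetilde H_{m-1}(\Delta P_{<h})$; summing over all levels $k\geq 1$ accounts for every non-minimal $h\in P$. The only remaining diagonal contribution comes from the $k=0$ column, which is nonzero only at $E^1_{0,0}=\mathbb Z^{|P_0|}$, hence only when $m=0$. This matches the minimal elements $h\in P^0$: for such $h$ one has $P_{<h}=\emptyset$, so under the stated conventions $\widetilde H_{-1}(\Delta P_{<h})=\widetilde H_{-1}(\emptyset)=\mathbb Z$ while $\widetilde H_{m-1}(\emptyset)=0$ for $m\neq 0$. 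Thus $\sum_{h\in P^0}\dim\widetilde H_{m-1}(\Delta P_{<h})$ equals $|P_0|$ when $m=0$ and $0$ otherwise, exactly reproducing the $k=0$ contribution. Combining the two cases gives $\sum_{k+l=m}\dim E^1_{k,l}=\sum_{h\in P}\dim\widetilde H_{m-1}(\Delta P_{<h})$, which is the claimed estimate.

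The step requiring the most care is the bookkeeping for the $k=0$ column: one must verify that the minimal elements of $P$ feed into the right-hand side precisely through the empty-complex convention $\widetilde H_{-1}(\emptyset)=\mathbb Z$, so that the single summand $E^1_{0,0}=\mathbb Z^{|P_0|}$ is correctly recovered as a sum of reduced homologies of empty sub-posets. Everything else is formal: both the convergence identity and the monotonicity $\dim E^\infty_{k,l}\leq\dim E^1_{k,l}$ are generic features of the spectral sequence already established in this section.
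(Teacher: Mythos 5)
Your proposal is correct and follows essentially the same route as the paper's own proof: read off $\dim H_m(\Delta P)$ as the sum of ranks on the diagonal $k+l=m$ of $E^\infty$, bound each term by the corresponding $E^1$ entry via the monotonicity of ranks across pages, and identify the diagonal sum of $E^1$ with $\sum_{h\in P}\dim\widetilde H_{m-1}(\Delta P_{<h})$, with the minimal elements absorbed through the convention $\widetilde H_{-1}(\emptyset)=\mathbb Z$. Your treatment of the $k=0$ column and the $m=0$ case is in fact slightly more explicit than the paper's.
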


\begin{proof}
  Fix $m\geq 1$. By the properties of the spectral sequence given above we conclude:

  \begin{enumerate}
    \item $\dim H_m(\Delta P)$ is a sum of dimensions $\dim E^\infty_{k,l}$ for all $E^\infty_{k,l}$ on the diagonal $k+l=m$.
    \item The dimension of $E^r_{k,l}$ cannot grow with the increase of $r$.
  \end{enumerate}

  It easily follows that
  $$
    \dim H_m(\Delta P)\leq\sum_{k+l=m}\dim E^1_{k,l}=\sum_{h\in P}\widetilde H_{m-1}(\Delta P_{<h}).
  $$

  If $m=0$, then the diagonal $k+l=0$ can contain more than one non-zero element: $E^r_{0,0}\subseteq E^1_{0,0}=\mathbb Z^{|P_0|}$, and $|P_0|=\sum_{h\in P^0}\widetilde H_{-1}(\Delta P_{<h})=\sum_{h\in P}\widetilde H_{-1}(\Delta P_{<h})$.
\end{proof}

The spectral sequence constructed above may be just as well applied to prove the absence of torsion:

\begin{theorem}
\label{homology_0}
  Suppose that for some fixed $m\geq 0$ and for any $h\in P$ we have $\widetilde H_m(\Delta P_{<h})=0$, then $H_{m+1}(\Delta P)=0$. Furthermore, if for all $h\in P$ the homologies $\widetilde H_{m-1}(\Delta P_{<h})$ are torsion-free, then $H_m(\Delta P)$ are also torsion-free.
\end{theorem}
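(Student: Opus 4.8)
The plan is to extract everything from the spectral sequence of the level filtration (\ref{E_k_l_r}), the decisive point being that the standing hypothesis $\widetilde H_m(\Delta P_{<h})=0$ wipes out an entire diagonal of the $E^1$-page. Indeed, by (\ref{E_k_l_r}) a cell $E^1_{k,l}$ lying on the diagonal $k+l=m+1$ equals $\bigoplus_{h\in P^k}\widetilde H_m(\Delta P_{<h})$ when $k\geq 1$, while $E^1_{0,m+1}=0$ because $m+1\neq 0$. Hence the assumption forces $E^1_{k,l}=0$ for every cell with $k+l=m+1$, and by the first listed property of the sequence this persists on every page, so $E^\infty_{k,l}=0$ whenever $k+l=m+1$.

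For the first assertion I would now simply recall that $H_{m+1}(\Delta P)$ is filtered by the groups $E^\infty_{k,l}$ with $k+l=m+1$; since all of them vanish, $H_{m+1}(\Delta P)=0$.

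For the \emph{furthermore} part I retain the standing hypothesis and add that each $\widetilde H_{m-1}(\Delta P_{<h})$ is torsion-free. By (\ref{E_k_l_r}) this makes every cell on the diagonal $k+l=m$ torsion-free, the exceptional cell $E^1_{0,0}=\mathbb Z^{|P_0|}$ being free so that the case $m=0$ is also covered. The key observation is that every differential \emph{entering} a cell on the diagonal $k+l=m$ starts from the diagonal $k+l=m+1$, which we have just seen is identically zero on all pages. Consequently the recursion $E^{r+1}_{k,l}=\ker d^r_{k,l}/\im d^r_{k+r,l-r+1}$ collapses, on this diagonal, to $E^{r+1}_{k,l}=\ker d^r_{k,l}$, so each step merely passes to a subgroup. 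Inducting on $r$, every $E^r_{k,l}$ with $k+l=m$ is a subgroup of the torsion-free group $E^1_{k,l}$, hence torsion-free; in particular so is $E^\infty_{k,l}=E^{n+1}_{k,l}$.

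It remains to reassemble $H_m(\Delta P)$. This group is filtered with successive quotients the cells $E^\infty_{k,l}$ on the diagonal $k+l=m$, and a short exact sequence $0\to A\to B\to C\to 0$ with $A$ and $C$ torsion-free has $B$ torsion-free (a torsion element of $B$ maps to $0$ in $C$, hence lies in the torsion-free $A$). A finite iteration then yields that $H_m(\Delta P)$, being filtered by torsion-free quotients, is torsion-free. The one genuinely delicate point is the collapse of the recursion: in general a subquotient of a torsion-free group can acquire torsion through the quotient operation, and it is precisely the vanishing of the incoming diagonal $k+l=m+1$ --- itself a consequence of the hypothesis $\widetilde H_m(\Delta P_{<h})=0$ --- that eliminates all quotienting on the diagonal $k+l=m$ and so preserves torsion-freeness.
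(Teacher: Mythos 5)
Your argument is correct and follows essentially the same route as the paper's own proof: the hypothesis kills the entire diagonal $k+l=m+1$ on the $E^1$-page and hence on all pages, giving $H_{m+1}(\Delta P)=0$, and the resulting vanishing of all incoming differentials means each $E^\infty$-cell on the diagonal $k+l=m$ is a subgroup of the torsion-free $E^1$-cell, whence $H_m(\Delta P)$ is torsion-free by the extension argument. Your write-up is in fact slightly more careful than the paper's, since you explicitly handle the cell $E^1_{0,0}=\mathbb Z^{|P_0|}$ for the case $m=0$ and spell out why torsion-freeness passes through the filtration.
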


\begin{proof}
  If for some $m\geq 0$ the homologies $\widetilde H_m(\Delta P_{<h})=0$ for all $h\in P$, then all the elements on a diagonal $k+l=m+1$ in the spectral sequence are zeroes for all $r\geq 1$ and thus $H_{m+1}(\Delta P)=0$.

  Moreover, for all $r \geq 1$ the diagonal $k+l=m+1$ in the spectral sequence contains only zeroes, hence for all $r\geq 1$ the image of the map $d^r_{k,l}$ for $k+l=m+1$ is equal to $0$. Thus, in the inductive construction of $E^r_{k,l}$ for $k+l=m$ we will take quotients by $0$. This means that for $m\neq 0$ each group $E^\infty_{k,l}$ on a diagonal $k+l=m$ is a subgroup of $E^1_{k,l}$.

  If $E^1_{k,l}=\bigoplus_{h\in P^k}\widetilde H_{m-1}(\Delta P_{<h})$ is torsion-free, then $E^r_{k,l}\subseteq E^1_{k,l}$ is torsion-free, hence $H_m(\Delta P)$ is also torsion-free.
\end{proof}

Theorem \ref{homology_0} is very useful for dealing with torsion in higher non-vanishing homologies of a subgroup lattice or a coset lattice of a finite group $G$: it is sufficient to prove that all the subgroups of a given group $G$ have torsion-free higher homologies and that higher non-vanishing homologies of $G$ have dimension at least one more than that of any of its subgroups.

\begin{corollary}
\label{sled_Hdim_leq}
  Suppose that there exist such $m_0$ that for all $m\geq m_0$ and for all $h\in P$ the homologies $\widetilde H_m(P_{<h})$ vanish, then for all $m\geq m_0+1$ the homologies $H_{m}(\Delta P)$ also vanish:
    $$
      \exists m_0\geq 0:\ \forall m\geq m_0\ \forall h\in P\ \ \widetilde H_m(\Delta P_{<h})=0\quad\Rightarrow\quad
      \forall m\geq m_0+1\ \ H_m(\Delta P)=0
    $$
\end{corollary}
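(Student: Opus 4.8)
The plan is to obtain the corollary as a direct iteration of Theorem~\ref{homology_0} over the range of indices, followed by a harmless reindexing. First I would fix an arbitrary integer $m \geq m_0$. The hypothesis of the corollary guarantees $\widetilde H_m(\Delta P_{<h}) = 0$ for every $h \in P$, which is precisely the assumption needed to invoke the first assertion of Theorem~\ref{homology_0} at this particular value of $m$. That theorem then yields $H_{m+1}(\Delta P) = 0$ for this $m$.

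Next I would let $m$ range over all integers satisfying $m \geq m_0$. As $m$ sweeps through $m_0, m_0+1, m_0+2, \ldots$, the conclusion $H_{m+1}(\Delta P) = 0$ produced at each step covers exactly the homology groups $H_{m'}(\Delta P)$ with $m' = m + 1 \geq m_0 + 1$. Collecting these gives the asserted vanishing $H_{m'}(\Delta P) = 0$ for every $m' \geq m_0 + 1$.

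I expect no genuine obstacle here: all of the spectral-sequence machinery is already packaged inside Theorem~\ref{homology_0}, and the only content of the corollary is the observation that a single-degree vanishing statement, applied uniformly across a tail of indices, propagates to a tail of vanishing homology groups one degree higher. The sole bookkeeping point worth flagging is the index shift $m' = m + 1$, which is what converts the threshold $m_0$ on the fibers $\Delta P_{<h}$ into the threshold $m_0 + 1$ on $\Delta P$ itself; since the implication of Theorem~\ref{homology_0} holds for each admissible $m$ independently, no extra uniformity or compatibility between the different degrees needs to be verified.
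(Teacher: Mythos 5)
Your argument is correct and is exactly the intended derivation: the paper states this as an immediate corollary of Theorem~\ref{homology_0} without writing out a proof, and the content is precisely the iteration over all $m \geq m_0$ together with the index shift $m' = m+1$ that you describe. Nothing further is needed.
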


\begin{theorem}
\label{H_m_geq}
  The following lower estimate holds for the Betti Numbers of a poset $P$: if $m\geq 1$, then
  $$
    \dim H_{m+1}(\Delta P)\geq
    \sum_{h\in P}\bigl(\dim\widetilde H_m(\Delta P_{<h})-\dim\widetilde H_{m-1}(\Delta P_{<h})-\dim\widetilde H_{m+1}(\Delta P_{<h})\bigr),
  $$
  if $m=0$, then
  $$
    \dim H_1(\Delta P)\geq
    \sum_{h\in P}\bigl(\dim\widetilde H_0(\Delta P_{<h})-\dim\widetilde H_1(\Delta P_{<h})-|P_0|),
  $$
  and obviously
  $$
    \dim H_0(\Delta P)\geq |P_0|-\sum_{h\in P}\dim\widetilde H_0(\Delta P_{<h}).
  $$
\end{theorem}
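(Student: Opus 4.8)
The plan is to read off both families of estimates directly from the third listed property of the spectral sequence, applied now on the stabilized page $E^\infty=E^{n+1}$ rather than on an intermediate $E^r$. Since the groups $E^\infty_{k,l}$ on a fixed diagonal $k+l=m$ are the successive quotients of a filtration of $H_m(\Delta P)$, their torsion-free ranks sum to $\dim H_m(\Delta P)$; hence the third property, read at $r=\infty$,
$$
  \dim H_m(\Delta P)=\sum_{k+l=m}\dim E^\infty_{k,l}\geq\sum_{k+l=m}\dim E^1_{k,l}-\sum_{k+l=m+1}\dim E^1_{k,l}-\sum_{k+l=m-1}\dim E^1_{k,l},
$$
is already a lower bound for $\dim H_m(\Delta P)$ in terms of $E^1$-data alone. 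Everything then reduces to rewriting each of the three diagonal sums of $E^1$-ranks as a sum of reduced Betti numbers of the subposets $\Delta P_{<h}$.

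First I would use the description (\ref{E_k_l_r}) of the first page. For $k\geq 1$ the cell $E^1_{k,l}=\bigoplus_{h\in P^k}\widetilde H_{k+l-1}(\Delta P_{<h})$ lies on the diagonal $k+l=m$ exactly when $k+l-1=m-1$, so summing over $k\geq 1$ and $h\in P^k$ collects precisely the $h\in P\setminus P^0$. The $k=0$ column is handled by the conventions $\widetilde H_{-1}(\emptyset)=\mathbb Z$ and $\widetilde H_j(\emptyset)=0$ for $j\neq-1$: for minimal $h\in P^0$ one has $P_{<h}=\emptyset$, so $\widetilde H_{m-1}(\Delta P_{<h})$ equals $\mathbb Z$ when $m=0$ and vanishes otherwise, matching $E^1_{0,0}=\mathbb Z^{|P_0|}$ and $E^1_{0,l}=0$ for $l\neq0$. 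Consequently the single clean identity
$$
  \sum_{k+l=m}\dim E^1_{k,l}=\sum_{h\in P}\dim\widetilde H_{m-1}(\Delta P_{<h})
$$
holds for every $m\geq0$, and in particular $\sum_{h\in P}\dim\widetilde H_{-1}(\Delta P_{<h})=|P_0|$.

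With this identity in hand the rest is bookkeeping. Substituting the three diagonals and then shifting $m\mapsto m+1$ turns the displayed spectral-sequence inequality into
$$
  \dim H_{m+1}(\Delta P)\geq\sum_{h\in P}\bigl(\dim\widetilde H_m(\Delta P_{<h})-\dim\widetilde H_{m-1}(\Delta P_{<h})-\dim\widetilde H_{m+1}(\Delta P_{<h})\bigr),
$$
which is the claim for $m\geq1$. For $m=0$ the same substitution produces a term $\sum_{h\in P}\dim\widetilde H_{-1}(\Delta P_{<h})$, which the convention collapses to $|P_0|$, yielding the stated bound for $\dim H_1(\Delta P)$. Finally, taking $m=0$ in the inequality itself (before any shift) and using that the diagonal $k+l=-1$ is empty gives $\dim H_0(\Delta P)\geq|P_0|-\sum_{h\in P}\dim\widetilde H_0(\Delta P_{<h})$.

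The one point I regard as the crux — and where I would be most careful — is the low-degree boundary, since the asymmetry between the $H_0$ and $H_1$ bounds comes entirely from the $k=0$ column being concentrated in a single cell and from the emptiness of the diagonal $k+l=-1$; the $\widetilde H_{-1}(\emptyset)=\mathbb Z$ convention must be applied consistently so that the $|P_0|$ terms appear rather than a spurious homology sum. No new geometric input is needed: the homotopy content (Theorem \ref{Buket_nadstrojka} and the resulting identification of $E^1$) and the rank-counting behind the third property are already established, so the argument is genuinely a translation of that property through the $E^1$ identification.
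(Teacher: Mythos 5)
Your proposal is correct and follows the paper's own argument: the paper likewise derives these bounds by applying the third listed property of the spectral sequence on the stabilized page and translating the diagonal sums of $E^1$-ranks into the sums $\sum_{h\in P}\dim\widetilde H_{m-1}(\Delta P_{<h})$, with the $\widetilde H_{-1}(\emptyset)=\mathbb Z$ convention accounting for the $|P_0|$ terms. Your write-up simply makes explicit the bookkeeping that the paper's one-sentence proof leaves to the reader.
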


\begin{proof}
  The estimates can be easily deduced from the fact the sum of all dimensions $E^\infty_{k,l}$ on a diagonal $k+l=m$ with respect to the total dimension of $E^1_{k,l}$ on the same diagonal cannot decrease by more than the sum of all dimensions of $E^1_{k,l}$ on the diagonals $k+l=m+1$ and $k+l=m-1$.
\end{proof}

Unfortunately, the right part of these expressions is often negative.

But if we know some homology groups of $\Delta P$ (for example if $\Delta P$ is connected, then $H_0(\Delta P)=\mathbb Z$), then the method demonstrated above allows us to obtain sharper estimates for the Betti numbers.

It is natural that this technic is useful in the case when Euler characteristics does not contain the full information on the homotopy type of $\Delta P$. That is the case when $\Delta P$ is not homotopy equivalent to a wedge of equidimensional spheres.

We also note that to use Theorems \ref{H_m_leq}, \ref{homology_0} and \ref{H_m_geq} one needs to know only the homologies of complexes $P_{<h}$, not the way they are linkes with each other. Thus for the subgroup lattice and the coset lattice of some finite group $G$ it is sufficient to know only the types of its subgroups and their number, but not the exact structure of the whole lattice.

\section{Decreasing Posets}

\begin{definition}
  For a topological space $X$ define its homology dimension as the maximal dimension of its non-vanishing reduced homologies:
  $$
    \Hdim X=\max\{m: \widetilde H_m(X)\neq 0\}.
  $$
  If all the reduced homologies $X$ are vanishing or $X=\emptyset$, we assume that $\Hdim X =-1$.
\end{definition}

For example, if $X$ is a wedge of spheres of possibly different dimensions, then $\Hdim X$ is the maximal dimension of spheres in the wedge.

The Cell Homology Theorem (see \cite{FomenkoFuks}) states that homology dimension of any simplicial complex does not exceed its ordinary dimension:
$$
  \Hdim\Delta\leq\dim\Delta.
$$

Corollary \ref{sled_Hdim_leq} can be reformulated in the new notation as
\begin{equation}
\label{Hdim_leq}
  \Hdim P\leq 1 + \max_{h\in P}\Hdim P_{<h}.
\end{equation}

Now we define the concept of decreasing poset and decreasing level inductively. We start from the bottom: the empty set $\emptyset$ is not decreasing. Let $P$ be some poset and the property of being decreasing is defined for subposets $P_{<h}$ for all $h\in P$. We will define this property for $P$.

As in the previous section we divide the poset into levels: $P^0$, $P^1$, $\ldots$, $P^n$. We say that the level $P^k$ is decreasing provided the poset $P_{<h}$ is decreasing for all $h\in P^k$. Let $s(P)$ be the number of decreasing levels in $P$.

We say that the poset $P$ is decreasing if 
$$
  \Hdim P\leq \dim P - s(P) - 1.
$$
Thus, we have defined the concept of being ``decreasing'' for 1-dimensional posets, then for 2-dimensional etc. For example, any contractible poset (including a single point) is decreasing, but any 0-dimensional poset except for a point is not.

\begin{lemma}
\label{ponij_basa}
  Suppose that $\dim\Delta P=n\geq 0$ and the level $P^{n}$ is decreasing. Then $\Hdim \Delta P\leq n-1$.
\end{lemma}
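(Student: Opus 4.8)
The plan is to reduce the whole statement to the already-established bound (\ref{Hdim_leq}), namely $\Hdim P \leq 1 + \max_{h\in P}\Hdim \Delta P_{<h}$. Given that inequality, it suffices to prove the single uniform estimate $\Hdim \Delta P_{<h}\leq n-2$ for every $h\in P$; feeding this into (\ref{Hdim_leq}) yields $\Hdim\Delta P\leq 1+(n-2)=n-1$ at once. So the entire argument is a statement about the fibers $\Delta P_{<h}$, which is exactly the form in which the hypothesis (``the top level is decreasing'') is given. The first bookkeeping step I would carry out is the dimension count: if $h\in P^k$, then by the definition of the levels the longest chain lying strictly below $h$ has exactly $k$ elements, hence $\dim\Delta P_{<h}=k-1$.

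With that in hand I would split $P$ by level. For an element $h\in P^k$ with $k<n$ (anything below the top), we have $\dim\Delta P_{<h}=k-1\leq n-2$, so the Cell Homology Theorem $\Hdim\Delta\leq\dim\Delta$ already gives $\Hdim\Delta P_{<h}\leq n-2$ with no further input. The only remaining case is the top level $h\in P^n$, where $\dim\Delta P_{<h}=n-1$ and the crude cell bound falls one short, giving only $\Hdim\Delta P_{<h}\leq n-1$. This is precisely where the hypothesis is used: ``$P^n$ is decreasing'' means by definition that $P_{<h}$ is a decreasing poset for every $h\in P^n$, so that $\Hdim\Delta P_{<h}\leq \dim\Delta P_{<h}-s(P_{<h})-1\leq (n-1)-0-1=n-2$, where I use only $s(P_{<h})\geq 0$. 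Combining the two cases gives $\max_{h\in P}\Hdim\Delta P_{<h}\leq n-2$, and (\ref{Hdim_leq}) closes the proof.

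The step I expect to be the genuine (if modest) obstacle is getting the level/dimension bookkeeping exactly right, since the whole argument rests on the identity $\dim\Delta P_{<h}=k-1$ for $h\in P^k$: one must read off from the level definition both that a chain of $k$ elements below $h$ produces a $(k-1)$-simplex and that no longer chain can exist in $P_{<h}$. All the real content is concentrated at the top level, where the extra ``$-1$'' built into the definition of \emph{decreasing} is exactly what upgrades the trivial dimension estimate by one; every lower level is handled for free because passing to $P_{<h}$ already drops the dimension. I would also record the boundary behaviour to confirm there is nothing to check separately: when $n=0$ the top level $P^n=P^0$ consists of minimal elements, for which $P_{<h}=\emptyset$ is non-decreasing, so the hypothesis cannot be satisfied and the lemma holds vacuously.
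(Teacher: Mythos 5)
Your proof is correct and follows essentially the same route as the paper's: split the elements by level, use the dimension bound $\Hdim\Delta P_{<h}\leq\dim\Delta P_{<h}\leq n-2$ for $h$ below the top level, use the definition of ``decreasing'' to drop the top-level fibers to $n-2$ as well, and conclude via the inequality (\ref{Hdim_leq}). The extra bookkeeping you supply (the identity $\dim\Delta P_{<h}=k-1$ for $h\in P^k$, the inequality $s(P_{<h})\geq 0$, and the vacuous case $n=0$) is all consistent with, and slightly more explicit than, the paper's argument.
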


\begin{proof}
  If $h\in P^{n}$, then $\dim\Delta P_{<h}=n-1$. The poset $P_{<h}$ is decreasing, so its homology dimension cannot be maximal. Hence, $\Hdim\Delta P_{<h}\leq n-2$.

  Now suppose that $h\notin P^{n}$, then $\Hdim\Delta P_{<h}\leq\dim\Delta P_{<h}\leq n-2$. From (\ref{Hdim_leq}) we conclude that $\Hdim \Delta P\leq n-1$.
\end{proof}

\begin{lemma}
\label{ponij_basa_basa}
  Suppose that $\dim\Delta P=n\geq 0$ and the level $P^k$ is decreasing for some $k\geq 0$. Then $\Hdim\Delta P\leq n-1$.
\end{lemma}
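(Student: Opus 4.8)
The plan is to follow the scheme of the proof of Lemma~\ref{ponij_basa} while feeding the present statement back into itself, arguing by induction on $n=\dim\Delta P$. When the decreasing level is the top one, i.e. $k=n$, Lemma~\ref{ponij_basa} already gives $\Hdim\Delta P\leq n-1$, and this covers the base of the induction (for $n=1$ the only possible decreasing level is $k=1=n$). Note also that we may assume $k\geq 1$, since $P_{<h}=\emptyset$ for every minimal $h$ and $\emptyset$ is not decreasing, so the level $P^0$ is never decreasing.

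For the inductive step suppose $k<n$ and examine the lower posets $P_{<h}$. The key observation is that the level structure is inherited: for $y<h$ every chain ending at $y$ consists of elements below $h$ and hence lies in $P_{<h}$, so the maximal length of such a chain is the same whether measured in $P$ or in $P_{<h}$; therefore $(P_{<h})^j=P^j\cap P_{<h}$ for every $j$. Since moreover $(P_{<h})_{<y}=P_{<y}$ for every $y<h$, the assumption that $P_{<y}$ is decreasing for all $y\in P^k$ immediately gives that $(P_{<h})_{<y}$ is decreasing for all $y\in(P_{<h})^k$, i.e. the level $(P_{<h})^k$ is decreasing in $P_{<h}$. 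For $h\in P^n$ a maximal chain $x_0<\cdots<x_{n-1}<h$ meets every level, so $x_k$ witnesses that $(P_{<h})^k$ is a genuine nonempty level with $k\leq\dim\Delta P_{<h}=n-1$; applying the induction hypothesis to $P_{<h}$ then yields $\Hdim\Delta P_{<h}\leq(n-1)-1=n-2$.

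It remains to combine these bounds through the estimate (\ref{Hdim_leq}). For $h\in P^n$ the previous step gives $\Hdim\Delta P_{<h}\leq n-2$, while for $h\in P^j$ with $j\leq n-1$ the trivial bound $\Hdim\Delta P_{<h}\leq\dim\Delta P_{<h}=j-1\leq n-2$ already suffices. Hence $\max_{h\in P}\Hdim\Delta P_{<h}\leq n-2$, and (\ref{Hdim_leq}) gives $\Hdim\Delta P\leq 1+(n-2)=n-1$, as asserted.

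The one point demanding care is this transfer step: one must check that being ``decreasing'' is an intrinsic property of the poset $P_{<h}$ and is not altered by viewing it inside $P$. This is exactly what the identities $(P_{<h})_{<y}=P_{<y}$ and $(P_{<h})^j=P^j\cap P_{<h}$ secure, since together they show that both the lower posets and the level filtration entering the recursive definition of ``decreasing'' agree in $P_{<h}$ and in $P$. Once this is in place the induction closes at once, extending Lemma~\ref{ponij_basa} from the top level $k=n$ to an arbitrary decreasing level $P^k$.
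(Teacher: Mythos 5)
Your proof is correct and is essentially the paper's argument repackaged: the paper climbs the levels of $P$ upward from $P^{k+1}$, showing $\Hdim\Delta P_{<h}\leq l-2$ for all $h\in P^{\leq l}$, while you run the same propagation as an induction on $\dim\Delta P$ applied to the subposets $P_{<h}$ for $h\in P^n$; both rest on Lemma~\ref{ponij_basa} together with the inequality (\ref{Hdim_leq}). Your explicit check that the level filtration and the property of being decreasing are inherited by $P_{<h}$ (via $(P_{<h})_{<y}=P_{<y}$ and $(P_{<h})^j=P^j\cap P_{<h}$, with $(P_{<h})^k$ nonempty for $h\in P^n$) is a detail the paper uses only tacitly, so it is a welcome clarification rather than a deviation.
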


\begin{proof}
  If $k=n$, then the statement is equivalent to Lemma \ref{ponij_basa}. Suppose $k<n$. Again by Lemma \ref{ponij_basa} the top level of $P_{<h}$ is decreasing for all $h\in P^{k+1}$. Hence, $\Hdim\Delta P_{<h}\leq\dim\Delta P_{<h}-1=k-1$.

  We use the induction: if for some $l\geq k+1$ and for all $h\in P^{\leq l}$ the homology dimension $\Hdim\Delta P_{<h}$ does not exceed $l-2$, then for all $h$ from the level $l+1$ we have:
  $$
    \forall h\in P^{l+1}\quad \Hdim P_{<h}\leq 1 + \max_{h\in P^{\leq l}}\Hdim P_{<h}\leq l-1.
  $$

  Thus, when we ``move'' to the next level, the homology dimension $\Hdim\Delta P_{<h}$ cannot increase by more than 1.

  Inductively considering the levels from the bottom we reach $P$ itself.
\end{proof}

\begin{theorem}
\label{thm_ponij_s_levels}
  If the poset $P$ containt exactly $s(P)$ decreasing levels, than $\Hdim\Delta P\leq n-s(P)$. Moreover, $P$ is decreasing exactly when $\Hdim\Delta P\leq n-s(P)-1$.
\end{theorem}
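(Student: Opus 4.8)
The plan is to prove the essential inequality $\Hdim\Delta P\le n-s(P)$ by induction on $n=\dim\Delta P$; the ``moreover'' clause then needs no separate argument, since ``$P$ is decreasing'' was \emph{defined} to mean $\Hdim\Delta P\le n-s(P)-1$, so it is automatically the one-better borderline case of that bound. The base case $n=0$ is immediate: a nonempty $0$-dimensional poset has $s(P)=0$ (its only level would be decreasing iff $\emptyset$ were, which it is not) and $\Hdim\Delta P\le\dim\Delta P=0$ by the Cell Homology Theorem.

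Before the inductive step I would record two bookkeeping facts about how levels behave under the two restriction operations I will use. Write $D=\{k:P^k\text{ is decreasing}\}$, so $s(P)=|D|$. First, for $h\in P^j$ the chains below $h$ lie entirely inside $P_{<h}$, so $\dim\Delta P_{<h}=j-1$, the levels of $P_{<h}$ are $(P_{<h})^k=P^k\cap P_{<h}$, and $(P_{<h})_{<g}=P_{<g}$ for every $g<h$; hence whenever $P^k$ is decreasing so is level $k$ of $P_{<h}$, which gives $s(P_{<h})\ge|D\cap\{0,\dots,j-1\}|$. Second, for $P'=P^{\le n-1}$ one has $(P')^k=P^k$ and $(P')_{<g}=P_{<g}$ for all relevant $g$, so level $k$ of $P'$ is decreasing exactly when $P^k$ is; therefore $\dim\Delta P'=n-1$ and $s(P')=s(P)-[\,n\in D\,]$.

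For the inductive step I would peel off the top level. By Theorem \ref{Buket_nadstrojka} (with $k=n-1$) we have $\Delta P/\Delta P'=\bigvee_{h\in P^n}\Sigma\Delta P_{<h}$, so the long exact homology sequence of the pair $(\Delta P,\Delta P')$ reads, in each degree $m$, $\widetilde H_m(\Delta P')\to\widetilde H_m(\Delta P)\to\bigoplus_{h\in P^n}\widetilde H_{m-1}(\Delta P_{<h})$. I would then show that both outer terms vanish for $m>n-s(P)$. The left term vanishes by the induction hypothesis, since $\Hdim\Delta P'\le(n-1)-s(P')=n-s(P)-[\,n\in D\,]\le n-s(P)$. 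For the right term I claim $\Hdim\Delta P_{<h}\le n-1-s(P)$ for every $h\in P^n$ in \emph{both} cases: if $P^n$ is not decreasing this follows from the induction hypothesis $\Hdim\Delta P_{<h}\le(n-1)-s(P_{<h})$ together with $s(P_{<h})\ge|D\cap\{0,\dots,n-1\}|=s(P)$; if $P^n$ is decreasing, then every such $P_{<h}$ is decreasing, and the defining inequality sharpens this to $\Hdim\Delta P_{<h}\le(n-1)-s(P_{<h})-1\le n-1-s(P)$. Thus $\widetilde H_{m-1}(\Delta P_{<h})=0$ once $m>n-s(P)$, and exactness forces $\widetilde H_m(\Delta P)=0$ there.

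The main obstacle is exactly the situation in which $P^n$ is decreasing and $s(P)\ge 2$: here the crude recursion $\Hdim P\le 1+\max_h\Hdim P_{<h}$ of (\ref{Hdim_leq}) — equivalently a direct appeal to Corollary \ref{sled_Hdim_leq} — loses one dimension and yields only $\Hdim\Delta P\le n-s(P)+1$. The device that recovers the sharp bound is to treat all of $P^{\le n-1}$ as a single block through the induction hypothesis, which already encodes $\Hdim\Delta P'\le n-s(P)$, rather than reconstructing its homology fiber by fiber, while simultaneously exploiting the strict ``$-1$'' afforded by the decreasing top-level fibers $\Delta P_{<h}$. Getting the level-counting identity $s(P')=s(P)-[\,n\in D\,]$ exactly right is the one delicate point, but it follows directly from the restriction observations above.
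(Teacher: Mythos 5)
Your proof is correct, and it takes a genuinely different route from the paper's. The paper argues bottom-up inside a fixed poset: it iterates the fiber inequality (\ref{Hdim_leq}) level by level, tracking the running quantity $\max_h \Hdim\Delta P_{<h}$ and observing that it grows by at most one per level and not at all across a decreasing level, so that after $n$ steps from the value $-1$ at $P^0$ one gets $\max_{h\in P}\Hdim\Delta P_{<h}\leq n-s(P)-1$, and one final application of (\ref{Hdim_leq}) finishes. You instead induct on $\dim\Delta P$, peel off only the top level, and glue the two pieces with the long exact sequence of the pair $(\Delta P,\Delta P^{\leq n-1})$ via Theorem \ref{Buket_nadstrojka}, invoking the induction hypothesis on $P^{\leq n-1}$ as a single block rather than fiber by fiber. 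The level bookkeeping you record --- $(P_{<h})^k=P^k\cap P_{<h}$, $(P_{<h})_{<g}=P_{<g}$, hence $s(P_{<h})\geq|D\cap\{0,\dots,j-1\}|$ and $s(P')=s(P)-[\,n\in D\,]$ --- is exactly the content compressed into the paper's phrase that the homology dimension ``does not increase if $P^{k+1}$ is decreasing'', and making it explicit (together with the check that every level of $P_{<h}$ below $h$ is nonempty) is a genuine gain in rigor over the paper's terse version, which leans on Lemmas \ref{ponij_basa} and \ref{ponij_basa_basa}. Both treatments agree that the ``moreover'' clause is literally the definition of a decreasing poset and requires no separate argument. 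The only cost of your approach is that it re-derives from the exact sequence of a pair what the paper already packaged in Corollary \ref{sled_Hdim_leq}; the benefit is a self-contained induction in which the sharpening by $s(P)$ appears transparently.
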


\begin{proof}
  Moving from $P^{k}$ to the next level we see that homology dimension of posets $P_{<h}$ does not increase, if $P^{k+1}$ is decreasing, and possibly increases by 1 otherwise. As $P$ contains exactly $s(P)$ decreasing levels, we conclude
  $$
    \max_{h\in P}\Hdim P_{<h}\leq n-s(P)-1.
  $$
\end{proof}

As we shall see later, this theorem has a useful application to the group lattices.

Thus, the existence of decreasing subposets $P_{<h}$ allows one to bound the homology dimension of $\Delta P$. It is natural to wish for some tools able to determine, whether a given poset is decreasing or not. So we consider a fixed level $P^k$ in a poset $P$. Let $s_k(P)$ be the number of decreasing posets in $P$ below $P^k$.

\begin{theorem}
  Suppose there exists some non-decreasing level $P^k$ such that for any $h\in P^k$ one of the following is true:
  \begin{enumerate}
    \item The poset $P_{<h}$ is decreasing;
    \item The poset $P_{<h}$ contains at least $s_k(P)+1$ decreasing levels.
  \end{enumerate}
  Then the poset $P$ is decreasing.
\end{theorem}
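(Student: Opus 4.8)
The plan is to prove the equivalent statement $\Hdim\Delta P\le\dim P-s(P)-1$, which by Theorem~\ref{thm_ponij_s_levels} means precisely that $P$ is decreasing, and to do so by strong induction on $\dim P$ (the base case being $\dim P=k$, when $P^k$ is the top level). I will repeatedly use two elementary facts: for $h$ at level $j$ one has $\dim\Delta P_{<h}=j-1$, and every decreasing level of $P$ lying strictly below $j$ restricts to a decreasing level of $P_{<h}$, so that $s(P_{<h})$ is at least the number of such levels; in particular, for $h\in P^k$ one has $s(P_{<h})\ge s_k(P)$.

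The engine of the argument is an improved estimate on the fibers over the non-decreasing level $P^k$. If $h\in P^k$ satisfies condition (1), then $P_{<h}$ is decreasing, so by Theorem~\ref{thm_ponij_s_levels}
$$\Hdim\Delta P_{<h}\le\dim\Delta P_{<h}-s(P_{<h})-1\le(k-1)-s_k(P)-1.$$
If instead $h$ satisfies condition (2), then $s(P_{<h})\ge s_k(P)+1$, and the general estimate of Theorem~\ref{thm_ponij_s_levels} yields the same bound $\Hdim\Delta P_{<h}\le(k-1)-s(P_{<h})\le(k-1)-s_k(P)-1$. Thus every fiber over $P^k$ is one dimension better than a generic non-decreasing level would allow. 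Combined with the (monotone) generic bound on the levels below $k$ and fed into~(\ref{Hdim_leq}), this disposes of the base case at once.

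For the inductive step I would set $n=\dim P$ and pass to $P'=P^{\le n-1}$, which still satisfies the hypotheses at the same level $k$ and is therefore decreasing by the induction hypothesis. The pair $(\Delta P,\Delta P')$ is governed by Theorem~\ref{Buket_nadstrojka}: since $\Delta P/\Delta P'=\bigvee_{h\in P^{n}}\Sigma\Delta P_{<h}$, its long exact sequence bounds $\widetilde H_m(\Delta P)$ by $\widetilde H_m(\Delta P')$ together with $\bigoplus_{h\in P^{n}}\widetilde H_{m-1}(\Delta P_{<h})$. When $P^{n}$ is non-decreasing we have $s(P')=s(P)$, both groups vanish above $\dim P-s(P)-1$, and the bound follows immediately.

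The hard part is the case where $P^{n}$ is itself a decreasing level (and, more generally, any decreasing level above $k$ encountered during the propagation). Then $s(P')=s(P)-1$, and the long exact sequence leaves precisely the boundary group $\widetilde H_{\dim P-s(P)-1}(\Delta P_{<h})$ for $h\in P^{n}$ to be killed; this forces me to show that each such fiber carries one decreasing level beyond the $s(P)-1$ it inherits from $P$, i.e. $s(P_{<h})\ge s(P)$. I would prove this by a trichotomy for $Q=P_{<h}$: either (A) the level $k$ of $Q$ is decreasing, which is a genuinely new decreasing level since $P^k$ is not; or (B) $Q$ has strictly more decreasing levels below $k$ than $P$ has, again producing an extra one; or (C) the level $k$ of $Q$ is non-decreasing with $s_k(Q)=s_k(P)$, so that $Q$ satisfies the very hypotheses of the theorem and is handled by the induction hypothesis. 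Extracting the \emph{extra} decreasing level in case (C)—rather than merely the conclusion that $Q$ is decreasing—is the crux, and it is here that the inductive statement must be strengthened so that a poset obeying the hypotheses is shown to pass the extra unit of saving down into each of its own fibers.
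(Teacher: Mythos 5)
Your opening computation is exactly the paper's argument: for each $h$ in the non-decreasing level $P^k$ you combine Theorem~\ref{thm_ponij_s_levels} with either the decreasingness of $P_{<h}$ or the presence of $s_k(P)+1$ decreasing levels to get $\Hdim\Delta P_{<h}\le (k-1)-s_k(P)-1$, one dimension better than the generic bound. The paper stops essentially there, asserting that ``there was no dimension increase between levels $P^{k-1}$ and $P^k$'' and hence $\Hdim\Delta P\le n-s(P)-1$. Your long-exact-sequence induction via Theorem~\ref{Buket_nadstrojka} is only a repackaging of the level-by-level estimate~(\ref{Hdim_leq}) that the paper uses, so up to that point the two arguments coincide.

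The genuine gap is in the propagation above level $k$, and you have located it precisely but not closed it. The mechanism by which Theorem~\ref{thm_ponij_s_levels} handles a decreasing level $P^{l+1}$ is the absolute bound $\Hdim\Delta P_{<h}\le\dim\Delta P_{<h}-s(P_{<h})-1\le l-t_l-1$, where $t_l$ denotes the number of decreasing levels of $P$ below level $l+1$; this matches the generic bound carried up from level $l$ but is blind to the extra unit saved at level $k$. To retain that unit one needs $s(P_{<h})\ge t_l+1$ for every $h$ in each decreasing level above $k$, and the only natural candidate for the extra decreasing level of $P_{<h}$ is its level $k$ --- which need not be decreasing under hypothesis (2), since a poset containing $s_k(P)+1$ decreasing levels is not thereby decreasing. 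Your trichotomy names exactly this obstruction, but case (C) is explicitly left as ``the crux'' and the promised strengthened inductive statement is never formulated, so the proposal does not constitute a proof. (In fairness, the paper's own one-sentence conclusion silently assumes that the saving at level $k$ and the savings at the decreasing levels accumulate, i.e.\ it does not address this point either; when no decreasing level lies above $P^k$ your argument, like the paper's, is complete.)
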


\begin{proof}
  As for each $h\in P^k$ a complex $P_{<h}$ contains at least $s_k(P)$ decreasing levels and $\dim P_{<h}=k-1$, Theorem \ref{thm_ponij_s_levels} states that $\Hdim P_{<h}$ does not exceed $\dim P_{<h}-s_k(P)=k-s_k(P)-1$. If $P_{<h}$ is decreasing, then $\Hdim P_{<h}\leq k-s_k(P)-2$. In the opposite case by the conditions of the theorem it contains at least $s_k(P)+1$ decreasing levels and again we conclude that $\Hdim P_{<h}\leq k-s_k(P)-2$.

  Thus, $\Hdim P_{<h}\leq k-s_k(P)-2$ for all $h\in P^k$, while the level $P^k$ is non-decreasing. It follows that there was no dimension increase between levels $P^{k-1}$ and $P^k$ and therefore $\Hdim P\leq n-s(P)-1$.
\end{proof}

\section{The Case of Group Lattices}

Let $G$ be a finite group. We use the following notation: $\Sub G=\{H \mid 1<H<G \}$ is a proper part of the subgroup lattice (ordered by inclusion), $\Co G= \{ xH \mid H<G,\ x\in G \}$ is a proper part of the coset lattice (by inclusion), $\Sco G = \Co G\setminus \{g \in G\}$.

Note that for $H \leq G$ we have $\Co G_{<H}=\Co H$. This fact is really handy for performing calculations in $\Co G$ as this poset contains lots of isomorphic fibers $\Co G_{<gH}$: if $g_1H_1$ and $g_2H_2$ are cosets of isomorphic subgroups $H_1$ and $H_2$ respectively, then $\Co G_{<g_1H_1}\cong \Co G_{<g_2H_2}$. The same holds for $\Sub G$ and $\Sco G$.

Now we reformulate the results proved by the spectral sequence method using the language of group lattices.

\begin{theorem}
\label{group_H_m_leq}
  Betti numbers of $\Sub G$, $\Co G$ or $\Sco G$ for all $m\geq 0$ can be majorized by Betti numbers of all nontrivial proper subgroups in the following way:
  $$
    \begin{array}{rl}
      \dim H_{m}(\Sub G) \leq &\sum\limits_{1<H<G}\dim\widetilde H_{m-1}(\Sub H),\\
      \dim H_{m}(\Co G) \leq &\sum\limits_{1\leq H<G}|G:H|\dim\widetilde H_{m-1}(\Co H),\\
      \dim H_{m}(\Sco G) \leq &\sum\limits_{1<H<G}|G:H|\dim\widetilde H_{m-1}(\Sco H).\\
    \end{array}
  $$
\end{theorem}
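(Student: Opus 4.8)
The plan is to apply Theorem \ref{H_m_leq} to each of the three posets $P\in\{\Sub G,\Co G,\Sco G\}$ and then to rewrite the universal bound $\sum_{h\in P}\dim\widetilde H_{m-1}(\Delta P_{<h})$ in group-theoretic terms by identifying every lower fiber $\Delta P_{<h}$. All three estimates descend from the same inequality, so the content lies entirely in recognizing the fibers and in counting how many elements $h$ share a given fiber type.

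For $\Sub G$ the identification is immediate. If $h=H$ is a subgroup with $\1\neq H$, i.e.\ $1<H<G$, then $(\Sub G)_{<H}=\{K:1<K<H\}=\Sub H$ directly from the definitions. Hence $\sum_{h\in\Sub G}\dim\widetilde H_{m-1}(\Delta(\Sub G)_{<h})=\sum_{1<H<G}\dim\widetilde H_{m-1}(\Sub H)$, with each subgroup counted exactly once, and Theorem \ref{H_m_leq} yields the first inequality. There is no index factor here, because distinct subgroups are genuinely distinct poset elements and are not translated.

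For $\Co G$ I would combine the fiber identity noted earlier, $\Co G_{<H}=\Co H$, with left translation. An element of $\Co G$ is a coset $xH$ with $H<G$, and left multiplication by $x^{-1}$ is an order-automorphism of the coset lattice carrying $xH$ to $H$; it therefore restricts to an isomorphism $\Co G_{<xH}\cong\Co G_{<H}=\Co H$. A coset determines its subgroup uniquely (recover $H$ as $g^{-1}\cdot xH$ for any $g\in xH$), so the elements of $\Co G$ partition according to $H$, and for each fixed $H<G$ there are exactly $|G:H|$ cosets $xH$, all with fiber isomorphic to $\Co H$. Summing gives the bound $\sum_{1\leq H<G}|G:H|\,\dim\widetilde H_{m-1}(\Co H)$, where the trivial subgroup $H=1$ is included: its cosets are the singletons, whose fiber is $\Co 1=\emptyset$ and contributes only in degree $m=0$ via the convention $\widetilde H_{-1}(\emptyset)=\mathbb Z$. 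The $\Sco G$ case runs identically once the singleton cosets are removed: for $1<H<G$ the fiber below $xH$ is isomorphic to $\Sco H$, there are again $|G:H|$ such cosets, and the third inequality follows.

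I expect the only real care to be needed in the two coset cases. One must check that the lower fiber of a \emph{general} coset $xH$ is isomorphic to $\Co H$ (respectively $\Sco H$) and not merely to the fiber of some other subgroup, that each coset is counted under exactly one subgroup so that the multiplicity is precisely the index $|G:H|$, and — for $\Co G$ — that the trivial-subgroup term is handled consistently with the conventions $\widetilde H_{-1}(\emptyset)=\mathbb Z$ and $\widetilde H_m(\emptyset)=0$ for $m\neq-1$. Once the fibers and their multiplicities are pinned down, all three bounds are immediate specializations of Theorem \ref{H_m_leq}.
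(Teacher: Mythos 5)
Your proposal is correct and follows exactly the paper's argument: all three bounds are specializations of Theorem \ref{H_m_leq}, with the coset cases handled by noting that $\Co G_{<xH}\cong\Co G_{<H}=\Co H$ for every coset and that each subgroup $H$ contributes $|G:H|$ cosets. Your additional care about the trivial-subgroup term and the convention $\widetilde H_{-1}(\emptyset)=\mathbb Z$ is consistent with the paper's conventions and fills in a detail the paper leaves implicit.
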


\begin{proof}
  For $\Sub G$ the statement is just a reformulation of Theorem \ref{H_m_leq}. For $\Co G$ and $\Sco G$ we need to mention that subposets $\Co G_{<H}$ and  $\Co G_{<gH}$ are isomorphic for every subgroup $H$ and every coset $gH$ of $H$ and the total number of cosets of $H$ is its index $|G:H|$.
\end{proof}

\begin{theorem}
\label{group_homology_0}
  Let $\mathcal P G$ be one of the posets $\Sub G$, $\Co G$ or $\Sco G$. If for some $m\geq 0$ and for all subgroups $1<H<G$ we have $\widetilde H_m(\Delta\mathcal P H) = 0$, then $H_{m+1}(\Delta\mathcal P G) = 0$. Furthermore, assume that for all $1<H<G$ homologies $\widetilde H_{m-1}(\Delta\mathcal P H)$ are torsion-free, then $H_m(\Delta\mathcal P G)$ are torsion-free.
\end{theorem}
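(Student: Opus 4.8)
The plan is to deduce this theorem from Theorem \ref{homology_0} in exactly the manner Theorem \ref{group_H_m_leq} was deduced from Theorem \ref{H_m_leq}: the three posets $\Sub G$, $\Co G$, $\Sco G$ have lower fibers that are themselves group lattices of subgroups, so the hypothesis phrased over nontrivial proper subgroups is precisely the hypothesis of the abstract theorem phrased over all poset elements. Since Theorems \ref{H_m_leq}--\ref{homology_0} were stated so as to depend only on the homotopy types of the fibers $\Delta P_{<h}$ and not on how they are assembled, this translation is essentially mechanical.

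First I would fix the poset $\mathcal P G$ under consideration and identify its lower fibers. For $P=\Sub G$ and an element $h=H$ (so $1<H<G$), the fiber is $P_{<h}=\Sub G_{<H}=\Sub H$, because the subgroups of $G$ strictly between $1$ and $H$ are exactly the proper nontrivial subgroups of $H$. For $P=\Co G$ and a coset $h=gH$, I would invoke the isomorphism already recorded in the text, $\Co G_{<gH}\cong\Co G_{<H}=\Co H$; for $P=\Sco G$ the same computation, after discarding the singleton cosets from both sides, gives $\Sco G_{<gH}\cong\Sco H$. In every case each element $h\in\mathcal P G$ lies over a subgroup $H$ with $1\leq H<G$, and the homotopy type of its fiber depends only on $H$, not on the particular coset.

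Next I would verify that the theorem's hypothesis matches that of Theorem \ref{homology_0} once the minimal elements are accounted for. The minimal elements of $\mathcal P G$ --- the singletons in $\Co G$, the prime-order cosets in $\Sco G$, and all elements over the trivial subgroup --- have empty lower fibers $P_{<h}=\emptyset$, for which $\widetilde H_m(\emptyset)=0$ for every $m\geq 0$ while $\widetilde H_{-1}(\emptyset)=\mathbb Z$ is torsion-free, by the standing convention. Hence the assumption $\widetilde H_m(\Delta\mathcal P H)=0$ for all $1<H<G$ already guarantees $\widetilde H_m(\Delta\mathcal P G_{<h})=0$ for every $h\in\mathcal P G$, and likewise the torsion-freeness of all $\widetilde H_{m-1}(\Delta\mathcal P H)$ extends to every fiber. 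Applying Theorem \ref{homology_0} to $P=\mathcal P G$ then yields $H_{m+1}(\Delta\mathcal P G)=0$ together with the torsion-free conclusion. The one point I would treat as the main obstacle is precisely this bookkeeping over the boundary fibers: one must confirm that the empty-set conventions render the minimal elements harmless both for the degree-$m$ vanishing and for the degree-$(m-1)$ torsion-freeness, since these elements are not covered by the quantifier $1<H<G$ in the stated hypothesis.
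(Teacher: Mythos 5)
Your proposal is correct and follows exactly the paper's route: the paper's entire proof is the one-line observation that the statement is a reformulation of Theorem \ref{homology_0}, obtained by identifying the lower fibers of $\Sub G$, $\Co G$, $\Sco G$ with $\Sub H$, $\Co H$, $\Sco H$. Your additional bookkeeping for the minimal elements via the conventions $\widetilde H_m(\emptyset)=0$ for $m\geq 0$ and $\widetilde H_{-1}(\emptyset)=\mathbb Z$ is a valid and welcome elaboration of a point the paper leaves implicit.
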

\begin{proof}
  The statement is a reformulation of \ref{homology_0}.
\end{proof}

\begin{corollary}
  Let $\mathcal{P}G$ be one of the posets $\Sub G$, $\Co G$ or $\Sco G$. If the maximal dimension of higher non-vanishing homologies of complexes $\Delta\mathcal P H$ at the vary of all proper subgroups $1<H<G$ is $m_0$ (if there are no such $H$, i.e. $G\cong \mathbb{Z}_p$, we assume $m_0=-1$), then the dimension of higher non-vanishing homologies of $\Delta\mathcal{P} G$ does not exceed $m_0+1$:

  $$
    \begin{array}{l}
      \Hdim \Sub G\leq 1 + \max\limits_{1<H<G}\Hdim\Sub H,\\
      \Hdim \Co G\leq 1 + \max\limits_{1<H<G}\Hdim\Co H,\\
      \Hdim \Sco G\leq 1 + \max\limits_{1<H<G}\Hdim\Sco H.\\
    \end{array}
  $$
\end{corollary}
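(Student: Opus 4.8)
The plan is to recognize this corollary as the $\Hdim$-reformulation of Theorem \ref{group_homology_0} (equivalently, of Corollary \ref{sled_Hdim_leq}) and to reduce all three cases to the single abstract inequality (\ref{Hdim_leq}), namely $\Hdim P\leq 1+\max_{h\in P}\Hdim P_{<h}$, applied to $P=\mathcal PG$. Once (\ref{Hdim_leq}) is in hand, the only work left is to identify the lower fibers $P_{<h}$ of each of the three posets and to check that the maximum of their homology dimensions coincides with the quantity $m_0$ appearing on the right-hand side.

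First I would treat $\Sub G$. Here every element is a subgroup $H$ with $1<H<G$, and the lower fiber is $(\Sub G)_{<H}=\{K\mid 1<K<H\}=\Sub H$. Hence $\max_{h\in\Sub G}\Hdim(\Sub G)_{<h}=\max_{1<H<G}\Hdim\Sub H=m_0$, and (\ref{Hdim_leq}) gives the first inequality immediately.

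Next I would handle $\Co G$ and $\Sco G$, where the extra subtlety is that the elements are cosets rather than subgroups. Using the observation recorded before the theorem that $\Co G_{<gH}\cong\Co H$ (and likewise $(\Sco G)_{<gH}\cong\Sco H$), I would note that as $gH$ ranges over all elements of $\Co G$ the fibers run, up to isomorphism, exactly over the posets $\Co H$ for $1\leq H<G$; the index $|G:H|$ records how many cosets give each isomorphism type, but this multiplicity is irrelevant for the maximum of $\Hdim$. Therefore $\max_{h\in\Co G}\Hdim(\Co G)_{<h}=\max_{1\leq H<G}\Hdim\Co H$, and plugging this into (\ref{Hdim_leq}) yields the second inequality; the third follows in the same way from $(\Sco G)_{<gH}\cong\Sco H$ with $1<H<G$.

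The main thing to be careful about --- rather than a genuine obstacle --- is the bookkeeping at the bottom of the lattice. For $\Co G$ the trivial subgroup $H=1$ does occur as a fiber $\Co 1=\emptyset$, whose homology dimension is $-1$ by the convention $\Hdim\emptyset=-1$; since $-1$ can never exceed the homology dimension of a non-trivial subgroup's lattice, replacing the range $1\leq H<G$ by $1<H<G$ leaves the maximum unchanged whenever a proper non-trivial subgroup exists. The sole degenerate case is $G\cong\mathbb Z_p$, where $\Sub G=\emptyset$ and every fiber of $\Co G$ and $\Sco G$ is empty; here the stated convention $m_0=-1$ together with (\ref{Hdim_leq}) gives $\Hdim\mathcal PG\leq 0$, which is exactly $m_0+1$. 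The only other routine point is the passage between reduced and unreduced homology in low degrees, which does not affect the high-dimensional vanishing that defines $\Hdim$.
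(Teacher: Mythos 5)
Your proposal is correct and matches the paper's intent: the corollary is stated without a separate proof precisely because it is the reformulation of Corollary \ref{sled_Hdim_leq} (equivalently, inequality (\ref{Hdim_leq})) via the fiber identifications $(\Sub G)_{<H}=\Sub H$, $\Co G_{<gH}\cong\Co H$ and $(\Sco G)_{<gH}\cong\Sco H$ recorded at the start of the section. Your handling of the trivial subgroup and of the degenerate case $G\cong\mathbb Z_p$ is consistent with the paper's conventions, so nothing further is needed.
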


\begin{corollary}
\label{group_no_tors}
  If for any proper subgroup $1<H<G$ higher non-vanishing homologies $H_{m_0}(\Delta\mathcal P H)$ are torsion-free and the poset $\mathcal P G$ is decreasing (i.e. the higher non-zero homologies of $\Delta\mathcal P G$ are exactly of dimension $m_0+1$), then higher homologies of $\Delta\mathcal P G$ are torsion-free.
\end{corollary}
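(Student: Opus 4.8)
The plan is to read the statement as an assertion about the single top-dimensional homology group $H_{m_0+1}(\Delta\mathcal{P}G)$ and to obtain it as a direct instance of the torsion-free (``furthermore'') half of Theorem \ref{group_homology_0}, applied at the index $m = m_0+1$. The starting observation is that $m_0$ is, by hypothesis, the largest dimension in which any proper subgroup lattice carries non-vanishing reduced homology; hence $\widetilde{H}_j(\Delta\mathcal{P}H) = 0$ for every $j > m_0$ and every $1 < H < G$, and in particular $\widetilde{H}_{m_0+1}(\Delta\mathcal{P}H) = 0$ for all such $H$.

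I would then simply check that the two hypotheses of Theorem \ref{group_homology_0} are met at $m = m_0+1$. The vanishing just recorded is the first hypothesis (at this index it reads $\widetilde{H}_{m_0+1}(\Delta\mathcal{P}H) = 0$), while the corollary's torsion-free assumption is that $\widetilde{H}_{m_0}(\Delta\mathcal{P}H) = \widetilde{H}_{(m_0+1)-1}(\Delta\mathcal{P}H)$ is torsion-free for every $1 < H < G$, which is exactly the second hypothesis. Theorem \ref{group_homology_0} then gives that $H_{m_0+1}(\Delta\mathcal{P}G)$ is torsion-free. Finally, to justify the phrase ``higher homologies'', I invoke the preceding corollary, which already bounds $\Hdim\mathcal{P}G \leq m_0+1$, together with the decreasing hypothesis (spelled out in the parenthetical as saying that the top non-vanishing homology of $\Delta\mathcal{P}G$ sits in dimension $m_0+1$): this identifies $H_{m_0+1}(\Delta\mathcal{P}G)$ as precisely the higher homology, so the claim follows.

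I expect no deep obstacle here; the argument is essentially index bookkeeping riding on Theorem \ref{group_homology_0}. The one point deserving care is that the torsion-free conclusion of Theorem \ref{homology_0}, on which Theorem \ref{group_homology_0} rests, secretly consumes both inputs at once: the vanishing $\widetilde{H}_{m_0+1}(\Delta\mathcal{P}H) = 0$ is what forces the differentials entering the diagonal $k+l = m_0+1$ of the spectral sequence to be zero, so that each surviving $E^\infty_{k,l}$ on that diagonal embeds in $E^1_{k,l}$, whereas the torsion-freeness of $\widetilde{H}_{m_0}(\Delta\mathcal{P}H)$ is what makes those $E^1_{k,l}$ torsion-free in the first place. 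Thus the real thing to confirm is that ``$m_0$ is maximal'' genuinely supplies the vanishing in every dimension strictly above $m_0$ --- which is immediate from the definition of $m_0$ --- after which the standard fact that an extension of torsion-free abelian groups is torsion-free upgrades the torsion-free subquotients $E^\infty_{k,l}$ of the filtration of $H_{m_0+1}(\Delta\mathcal{P}G)$ to torsion-freeness of the whole group.
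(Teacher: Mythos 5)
Your argument is correct and coincides with the paper's intended (and only implicitly given) proof: the paper states this as a corollary of Theorem \ref{group_homology_0}, to be applied at $m=m_0+1$ exactly as you do, with the vanishing $\widetilde H_{m_0+1}(\Delta\mathcal P H)=0$ supplied by the maximality of $m_0$ and the decreasing hypothesis pinning the top homology of $\Delta\mathcal P G$ at dimension $m_0+1$. Your closing remark about the two hypotheses feeding the spectral-sequence argument, and about extensions of torsion-free groups, is also consistent with the proof of Theorem \ref{homology_0}.
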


Consider a coset lattice of $PSL(2,7)$. The simplicial complex $\Delta\Co PSL(2,7)$ has dimension $4$ (the longest chain is $S_4 > A_4 > V_4 > \mathbb Z_2 > 1$). Howerver, coset posets of any subgroup: $\mathbb Z_2$, $\mathbb Z_3$, $\mathbb Z_4$, $\mathbb Z_7$, $V_4$, $D_8$, $A_4$, $F_{21}$, $S_3$ and $S_4$ have non-zero homologies either in dimension $0$ ($\mathbb Z_2$, $\mathbb Z_3$, $\mathbb Z_4$, $\mathbb Z_7$), or in dimension $1$ ($V_4$, $A_4$, $D_8$, $F_{21}$ and $S_3$), or in dimension $2$ ($S_4$). As all proper subgroups of $PSL(2,7)$ are solvable, their homologies are torsion-free (see \cite{KraThev}). Applying Thorem \ref{group_homology_0} and Corollary \ref{group_no_tors} we obtain
$$
  \begin{array}{l}
    \widetilde H_4(\Delta\Co PSL(2,7))=0,\\
    \widetilde H_3(\Delta\Co PSL(2,7))\mbox{ is torsion-free.}\\
  \end{array}
$$
Obviously, $\Delta\Co PSL(2,7)$ is connected. Moreover, $\Co PSL(2,7)$ proved to be simply connected (see \cite{Ramras}). Therefore by Theorems \ref{group_H_m_leq} and \ref{H_m_geq} we immediately obtain some estimates for homology ranks of $\Co PSL(2,7)$:
$$
  \begin{array}{lll}
    &\widetilde H_0(\Co PSL(2,7))&=0;\\
    &\widetilde H_1(\Co PSL(2,7))&=0;\\
    \chi(\Co PSL(2,7))\leq&\widetilde H_2(\Co PSL(2,7))&\leq 14616;\\
    &\widetilde H_3(\Co PSL(2,7))&\leq 11760;\\
    &\widetilde H_4(\Co PSL(2,7))&=0.\\
  \end{array}
$$

Now we shall find a connection between homologies of posets $\Sub G$, $\Co G$ and $\Sco G$ for any finite $G$. Consider the opposite filtration $\Co G$ (starting from the maximal cosets and ending by single elements): assume that $\dim\Delta\Co G=n+1$, then $\Co G^{n+1}=G$, as for any $g_0\in G$ the subposet of cosets containing $g_0$ is isomorphic to $\Sub G$, hence, $\dim\Delta\Sub G=n$.

\begin{theorem}
\label{LCS_G}
  Consider a poset $\mathcal S G=\Co G\setminus\Co G^{n+1}\subseteq\Co G$. Assume that for some $m$ we have $\widetilde H_{m}(\Delta\mathcal S G)=0$, then the homology groups $\widetilde H_{m}(\Delta\Co G)$ can be embedded into homology groups $\widetilde H_{m-1}(\Delta\Sub G)^{|G|}$ and there exists a surjection $\widetilde H_{m+1}(\Delta\Co G)\to \widetilde H_m(\Delta\Sub G)^{|G|}$.
\end{theorem}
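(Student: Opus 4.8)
The plan is to apply the filtration/quotient machinery of Theorem~\ref{Buket_nadstrojka} to the \emph{opposite} filtration of $\Co G$ and then read the conclusion off the long exact homology sequence of a pair. Since a chain in a poset is literally the same thing as a chain in its dual, $\Delta\Co G$ coincides with the complex of the opposite poset, and $\Co G_{>h}$ plays the role of $P_{<h}$ for the reversed order; hence Theorem~\ref{Buket_nadstrojka} applies verbatim to the opposite filtration, with the fibers $\Delta\Co G_{>h}$ in place of $\Delta P_{<h}$. First I would check that the top level $\Co G^{n+1}$ consists of exactly the singleton cosets $\{g\}$, $g\in G$: a singleton sits below a maximal chain of $n+1$ cosets (corresponding to a maximal chain of nontrivial proper subgroups, of which there are $n+1$ since $\dim\Delta\Sub G=n$), whereas any coset $gH$ with $H\neq 1$ lies below strictly shorter chains, so it is not on the top level. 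For each singleton the fiber is $\Co G_{>\{g\}}\cong\Sub G$ under the order isomorphism $gH\mapsto H$, and removing this top level leaves precisely $\Sco G$.

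With this bookkeeping done, Theorem~\ref{Buket_nadstrojka} at the top step ($k+1=n+1$) gives the cofiber
\[
  \Delta\Co G/\Delta\Sco G=\bigvee_{g\in G}\Sigma\,\Delta\Sub G,
\]
and the suspension and wedge identities for reduced homology, $\widetilde H_m(\Sigma X)=\widetilde H_{m-1}(X)$ and $\widetilde H_m(X\vee Y)=\widetilde H_m(X)\oplus\widetilde H_m(Y)$, then yield $\widetilde H_m(\Delta\Co G/\Delta\Sco G)=\widetilde H_{m-1}(\Delta\Sub G)^{|G|}$ for every $m$.

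Next I would feed this into the long exact sequence of the CW pair $(\Delta\Co G,\Delta\Sco G)$, using that for such a good pair $H_*(\Delta\Co G,\Delta\Sco G)\cong\widetilde H_*(\Delta\Co G/\Delta\Sco G)$. Imposing the hypothesis $\widetilde H_m(\Delta\Sco G)=0$, the segment
\[
  \widetilde H_m(\Delta\Sco G)\to\widetilde H_m(\Delta\Co G)\xrightarrow{\,j\,}\widetilde H_m(\Delta\Co G/\Delta\Sco G)
\]
has zero left-hand term, so $j$ is injective, which is exactly the embedding of $\widetilde H_m(\Delta\Co G)$ into $\widetilde H_{m-1}(\Delta\Sub G)^{|G|}$; and the segment
\[
  \widetilde H_{m+1}(\Delta\Co G)\xrightarrow{\,j\,}\widetilde H_{m+1}(\Delta\Co G/\Delta\Sco G)\xrightarrow{\,\partial\,}\widetilde H_m(\Delta\Sco G)
\]
has $\partial=0$ (its target vanishes), so $j$ is surjective onto $\widetilde H_{m+1}(\Delta\Co G/\Delta\Sco G)=\widetilde H_m(\Delta\Sub G)^{|G|}$, which is the claimed surjection.

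The only genuinely delicate point is the transfer of Theorem~\ref{Buket_nadstrojka} to the opposite filtration: I must make sure the reindexed levels really single out the singletons as the unique top level, each with fiber isomorphic to $\Sub G$, and that the wedge decomposition of the cofiber itself (not merely its homology) is valid, so that the pair's relative homology is computed correctly. Once this is in place the remainder is the purely formal exactness argument above; the degenerate cases (for instance $\Sub G=\emptyset$ when $G$ has prime order) are absorbed by the conventions $\widetilde H_{-1}(\emptyset)=\mathbb Z$ and $\widetilde H_m(\emptyset)=0$ for $m\neq-1$ fixed earlier.
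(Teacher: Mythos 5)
Your proposal is correct and follows essentially the same route as the paper: both identify the top level of the opposite filtration with the singleton cosets, apply Theorem~\ref{Buket_nadstrojka} to obtain $\Delta\Co G/\Delta\mathcal S G=\bigvee^{|G|}\Sigma\Delta\Sub G$, and then read the injection and surjection off the long exact sequence of the pair $(\Delta\Co G,\Delta\mathcal S G)$ under the hypothesis $\widetilde H_m(\Delta\mathcal S G)=0$. Your extra care in verifying that the top level consists exactly of the singletons and that each fiber $\Co G_{>\{g\}}$ is order-isomorphic to $\Sub G$ only makes explicit what the paper leaves implicit.
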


\begin{proof}
  The easiest way to prove this theorem is to use an exact sequence of a pair (see \cite{FomenkoFuks}): consider a topological pair $(\Delta\Co G,\Delta\mathcal S G)$. By Theorem \ref{Buket_nadstrojka} we have
  $$
    \Delta\Co G/\Delta\mathcal S G =
    \Delta\Co G^{\leq n+1}/\Delta\Co G^{\leq n} =
    \bigvee_{g\in G}\Sigma \Co G_{>\{g\}} =
    \bigvee_{g\in G}\Sigma \Sub G =
    \bigvee^{|G|}\Sigma \Sub G.
  $$

  The exact sequence for the pair $(\Delta\Co G,\Delta\mathcal S G)$ is the following:
  $$
    \ldots\to
    \widetilde H_{m+1}(\Co G)\to
    \widetilde H_{m+1}(\bigvee^{|G|}\Sigma \Sub G)\to
    \widetilde H_{m}(\mathcal S G)\to
    \widetilde H_m(\Co G)\to
    \widetilde H_m(\bigvee^{|G|}\Sigma \Sub G)\to
    \ldots
  $$

  If $\widetilde H_{m}(\mathcal S G)=0$, then we get an injection $\widetilde H_{m}(\Delta\Co G)\to\widetilde H_{m-1}(\Delta\Sub G)^{|G|}$ and a surjection $\widetilde H_{m+1}(\Delta\Co G)\to \widetilde H_m(\Delta\Sub G)^{|G|}$.
\end{proof}

\begin{corollary}
\label{CG_LG}
  Assume that $\dim\Delta\Sub G=n\geq 0$. Then $\dim\Delta\Co G=n+1$ and $H_{n+1}(\Delta\Co G)$ are embedded into $\widetilde H_n(\Delta\Sub G)^{|G|}$. Particularly, if homologies $\widetilde H_n(\Delta\Sub G)=0$ (or are torsion-free), then $H_{n+1}(\Delta\Co G)=0$ (or are torsion-free, respectively).
\end{corollary}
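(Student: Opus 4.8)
The plan is to derive the whole statement by specializing Theorem \ref{LCS_G} to the single index $m=n+1$, so the main task reduces to verifying that theorem's one hypothesis, namely $\widetilde H_{n+1}(\Delta\mathcal S G)=0$. Recall that $\mathcal S G=\Co G\setminus\Co G^{n+1}$ is obtained from $\Co G$ by deleting the single-element cosets $\{g\}$, so that $\mathcal S G=\Sco G$ consists precisely of the cosets $xH$ with $1<H<G$. First I would pin down the relevant dimensions. A maximal chain in $\Co G$ has the form $\{g\}\subset gH_0\subset gH_1\subset\cdots\subset gH_n$, where $1<H_0<\cdots<H_n<G$ is a maximal chain in $\Sub G$ (of length $n$, since $\dim\Delta\Sub G=n$); this chain consists of $n+2$ cosets, so $\dim\Delta\Co G=n+1$, exactly as already noted in the paragraph preceding Theorem \ref{LCS_G}. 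Deleting the bottom element $\{g\}$ from each such chain shows that a maximal chain in $\Sco G$ has $n+1$ cosets, whence $\dim\Delta\Sco G=n$.

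Once $\dim\Delta\Sco G=n$ is established, the Cell Homology Theorem $\Hdim\Delta\leq\dim\Delta$ gives $\widetilde H_{n+1}(\Delta\Sco G)=0$ for free, which is exactly the hypothesis of Theorem \ref{LCS_G} at $m=n+1$. Invoking that theorem then produces an embedding $\widetilde H_{n+1}(\Delta\Co G)\hookrightarrow\widetilde H_n(\Delta\Sub G)^{|G|}$. Since $n\geq 0$ forces $n+1\geq 1$, reduced and unreduced homology coincide in this degree, so $H_{n+1}(\Delta\Co G)=\widetilde H_{n+1}(\Delta\Co G)$ embeds into $\widetilde H_n(\Delta\Sub G)^{|G|}$, which is the principal assertion.

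For the closing sentence I would simply read off the properties that a subgroup inherits: if $\widetilde H_n(\Delta\Sub G)=0$ then its $|G|$-fold direct sum is $0$ and every subgroup of it is $0$; and a subgroup of a torsion-free abelian group is again torsion-free. Applying this to the embedding of $H_{n+1}(\Delta\Co G)$ yields the stated vanishing and torsion-freeness.

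I do not anticipate a genuine obstacle, since the real content is carried entirely by Theorem \ref{LCS_G}. The only point demanding care is the dimension bookkeeping: I must correlate maximal chains in $\Co G$ and in $\Sco G$ with maximal chains in $\Sub G$ precisely enough to see that adjoining or removing the bottom coset $\{g\}$ shifts chain length by exactly one, so that the counts $\dim\Delta\Co G=n+1$ and $\dim\Delta\Sco G=n$ are exact equalities and not merely upper bounds. These exact values are what legitimise both the claim $\dim\Delta\Co G=n+1$ and the application of Theorem \ref{LCS_G} at $m=n+1$ via the vanishing $\widetilde H_{n+1}(\Delta\Sco G)=0$.
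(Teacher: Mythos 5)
Your proof is correct and follows exactly the paper's route: the paper's own argument is the one-line observation that $\dim\Delta\mathcal S G=n$, so $\widetilde H_{n+1}(\Delta\mathcal S G)=0$ and Theorem \ref{LCS_G} applies with $m=n+1$. Your additional bookkeeping (matching maximal chains in $\Co G$ and $\Sco G$ with those in $\Sub G$, and the identification $\mathcal S G=\Sco G$) just makes explicit what the paper leaves implicit.
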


\begin{proof}
  As $\dim\mathcal S G=n$, it is sufficient to use Theorem \ref{LCS_G} for the case $m=n+1$.
\end{proof}

\begin{corollary}
\label{CG_LG_dim}
  The following estimate for the Betti numbers holds:
  $$
    \dim\widetilde H_{n+1}(\Delta\Co G)\leq |G|\dim\widetilde H_n(\Delta\Sub G).
  $$
\end{corollary}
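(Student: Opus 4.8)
The plan is to read the inequality directly off the embedding established in Corollary \ref{CG_LG}, which already carries all the topological content; what remains is a purely algebraic observation about torsion-free ranks.

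First I would note that since $n\geq 0$ we have $n+1\geq 1$, so reduced and unreduced homology agree in degree $n+1$ and hence $\dim\widetilde H_{n+1}(\Delta\Co G)=\dim H_{n+1}(\Delta\Co G)$; it therefore suffices to bound the latter. By Corollary \ref{CG_LG} there is an injection of abelian groups $H_{n+1}(\Delta\Co G)\hookrightarrow\widetilde H_n(\Delta\Sub G)^{|G|}$, which is exactly the object I want to exploit.

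Next I would invoke the elementary fact that torsion-free rank is monotone under injections: if $A\hookrightarrow B$ are finitely generated abelian groups, then tensoring with $\mathbb Q$ preserves injectivity, so $\dim A\leq\dim B$. All the groups in sight are finitely generated because $\Delta\Sub G$ and $\Delta\Co G$ are finite simplicial complexes, so this applies. Combining monotonicity with additivity of rank over direct sums, $\dim\bigl(\widetilde H_n(\Delta\Sub G)^{|G|}\bigr)=|G|\dim\widetilde H_n(\Delta\Sub G)$, immediately yields the claimed bound.

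There is no genuine obstacle here. The substantive work—the construction of the embedding via the long exact sequence of the pair $(\Delta\Co G,\Delta\mathcal S G)$ together with the identification $\Delta\Co G/\Delta\mathcal S G\cong\bigvee^{|G|}\Sigma\Sub G$ from Theorem \ref{Buket_nadstrojka}—was already carried out in Theorem \ref{LCS_G} and Corollary \ref{CG_LG}. This corollary simply extracts the numerical consequence of that embedding, and the only points worth verifying are the finite generation of the homology groups and the collapse of the reduced/unreduced distinction in positive degree, both of which are immediate.
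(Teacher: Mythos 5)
Your proof is correct and follows exactly the route the paper intends: the corollary is stated immediately after Corollary \ref{CG_LG} precisely so that the rank bound falls out of the injection $H_{n+1}(\Delta\Co G)\hookrightarrow\widetilde H_n(\Delta\Sub G)^{|G|}$ by monotonicity of torsion-free rank under injections and additivity over direct sums. The paper leaves this deduction implicit, and your write-up supplies the same (routine) details.
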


Now we apply the concept of decreasing posets to the case of subgroup lattices. It is important to mention that if the hypothesis that $\Sub G$, $\Co G$ and $\Sco G$ are all homotopy equivalent to wedges of spheres of possibly different dimensions (see \cite{ShaHypothesis}), then their homology dimensions coincide with maximal dimensions of spheres in the wedges.

We should say some facts about the structure of levels in group lattices: for any $k\geq 0$ levels $\Sub G^k$, $\Sco G^k$ and $\Co G^{k+1}$ in the posets $\Sub G$, $\Co G$ and $\Sco G$ respectively contain the same subgroups. A level in each poset $\mathcal P G$ is decreasing provided the corresponding posets $\mathcal P H$ are decreasing for all subgroups on this level.

\begin{theorem}
  Let $\mathcal P G$ be one of the posets $\Sub G$, $\Co G$ or $\Sco G$ and $\dim\Delta\mathcal P G=n$. Assume that $\mathcal P G$ contains exactly $s(\mathcal P G)$ decreasing levels. Then $\Hdim\Delta\mathcal P G\leq n-s(\mathcal P G)$. Moreover, $\mathcal P G$ is decreasing if and only if $\Hdim\Delta\Sub G\leq n-s(\mathcal P G)-1$.
\end{theorem}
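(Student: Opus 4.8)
The plan is to observe that this statement is precisely the specialization of the general poset result Theorem~\ref{thm_ponij_s_levels} to the three concrete posets $\mathcal P G\in\{\Sub G,\Co G,\Sco G\}$. Each such $\mathcal P G$ is a finite poset of dimension $\dim\Delta\mathcal P G=n$, so Theorem~\ref{thm_ponij_s_levels} will apply word for word, provided I check that the notion of \emph{decreasing level} used here for group lattices coincides with the inductive notion introduced earlier. Everything therefore reduces to matching the two definitions.

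First I would verify the identification of lower fibers. For a subgroup $H\in\Sub G$ one has $(\Sub G)_{<H}=\Sub H$; and using the remark $\Co G_{<H}=\Co H$ together with the stated isomorphism $\Co G_{<g_1H_1}\cong\Co G_{<g_2H_2}$ of fibers over cosets of isomorphic subgroups, every lower fiber $\Co G_{<gH}$ is isomorphic to $\Co H$. Deleting the singleton cosets then gives $\Sco G_{<gH}\cong\Sco H$ in the same way. Consequently, for every element of $\mathcal P G$ lying over a subgroup $H$, its lower fiber is isomorphic to $\mathcal P H$. This is exactly what makes the clause ``a level of $\mathcal P G$ is decreasing provided the corresponding $\mathcal P H$ are decreasing'' agree with the general requirement that $\mathcal P G_{<h}$ be decreasing for all $h$ on that level.

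With the fibers identified, the inductive predicate ``decreasing,'' the integer $s(\mathcal P G)$ counting decreasing levels, and the dimension $n$ all carry the same meaning as before. Applying Theorem~\ref{thm_ponij_s_levels} to $P=\mathcal P G$ then yields at once both the bound $\Hdim\Delta\mathcal P G\leq n-s(\mathcal P G)$ and the characterization that $\mathcal P G$ is decreasing exactly when $\Hdim\Delta\mathcal P G\leq n-s(\mathcal P G)-1$ (the ``moreover'' clause being read with $\mathcal P G$ in place of $\Sub G$). I expect no substantive obstacle; the only point requiring care is the level bookkeeping recorded just before the theorem, namely that the levels $\Sub G^k$, $\Sco G^k$ and $\Co G^{k+1}$ sit over the same subgroups, so that $s(\mathcal P G)$ is read off correctly from the subgroup structure. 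This shift in indexing affects neither the value of $s(\mathcal P G)$ nor the quantity $n-s(\mathcal P G)$, so it does not alter the conclusion.
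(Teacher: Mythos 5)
Your proposal is correct and follows exactly the paper's route: the paper's own proof is the one-line remark that the statement is a direct reformulation of Theorem~\ref{thm_ponij_s_levels}, and you simply spell out the fiber identifications $(\Sub G)_{<H}=\Sub H$, $\Co G_{<gH}\cong\Co H$, $\Sco G_{<gH}\cong\Sco H$ that justify the reformulation. The extra care you take with the level indexing is sound and does not change the argument.
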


\begin{proof}
  A direct reformulation of Theorem \ref{thm_ponij_s_levels}.
\end{proof}

Corollary \ref{group_no_tors} can be reformulated in the following way:

\begin{corollary}
  Suppose that $G$ is a finite group, the higher homologies of $\mathcal P H$ are torsion-free for all the proper subgroups $H$ of $G$ and the poset $\mathcal P G$ is not decreasing. Then the higher homologies of $\mathcal P G$ are torsion-free.
\end{corollary}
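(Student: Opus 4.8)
The plan is to deduce the statement from the torsion-free clause of Theorem \ref{group_homology_0}, which guarantees that $H_m(\Delta\mathcal P G)$ is torsion-free as soon as every $\widetilde H_{m-1}(\Delta\mathcal P H)$, $1<H<G$, is torsion-free. The whole difficulty is to locate the dimension $m$ that carries the higher homology of $\mathcal P G$, and it is precisely here that the hypothesis ``$\mathcal P G$ is not decreasing'' is used.

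Write $n=\dim\Delta\mathcal P G$, $s=s(\mathcal P G)$, $d=\Hdim\Delta\mathcal P G$ and $m_0=\max_{1<H<G}\Hdim\Delta\mathcal P H$. First I would assemble three bounds. For group lattices the fibers $P_{<h}$ run exactly over the posets $\mathcal P H$ attached to proper subgroups (recall $\Co G_{<xH}=\Co H$ and the analogous facts for $\Sub$ and $\Sco$), so the inequality $\max_{h}\Hdim P_{<h}\leq n-s-1$ established inside the proof of Theorem \ref{thm_ponij_s_levels} becomes $m_0\leq n-s-1$ here. Next, the general bound (\ref{Hdim_leq}) gives $d\leq m_0+1$. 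Finally, Theorem \ref{thm_ponij_s_levels} always yields $d\leq n-s$, with $\mathcal P G$ decreasing if and only if $d\leq n-s-1$; hence the assumption that $\mathcal P G$ is \emph{not} decreasing forces $d=n-s$.

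The key step is then a short chain of inequalities: $n-s=d\leq m_0+1\leq n-s$, so every term is equal and $d=m_0+1$, that is, $d-1=m_0=\max_{1<H<G}\Hdim\Delta\mathcal P H$. In other words, failing to be decreasing is exactly the statement that the homology dimension jumps up by one in passing from the proper subgroups of $G$ to $G$ itself.

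With $d-1=m_0$ in hand the torsion argument is immediate. I would apply Theorem \ref{group_homology_0} with $m=d$, checking that $\widetilde H_{d-1}(\Delta\mathcal P H)$ is torsion-free for each $1<H<G$: since $d-1=m_0\geq\Hdim\Delta\mathcal P H$, either $\Hdim\Delta\mathcal P H<d-1$, in which case $\widetilde H_{d-1}(\Delta\mathcal P H)=0$, or $\Hdim\Delta\mathcal P H=d-1$, in which case $\widetilde H_{d-1}(\Delta\mathcal P H)$ is the higher homology of $\mathcal P H$ and is torsion-free by hypothesis. Either way it is torsion-free, so Theorem \ref{group_homology_0} gives that $H_d(\Delta\mathcal P G)$, the higher homology of $\mathcal P G$, is torsion-free. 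The one genuinely delicate point is the bookkeeping of the previous two paragraphs: one must verify that ``not decreasing'' truly pins the top homology of $\mathcal P G$ to dimension $m_0+1$ rather than allowing it to sit lower, for otherwise some $\widetilde H_{d-1}(\Delta\mathcal P H)$ could be a non-top homology lying outside the scope of the hypothesis. Everything after this identification is a direct appeal to Theorem \ref{group_homology_0}.
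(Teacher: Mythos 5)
Your argument is correct and is exactly the route the paper intends: the identity $\Hdim\Delta\mathcal P G=1+\max_{1<H<G}\Hdim\Delta\mathcal P H$ that you extract from ``not decreasing'' (via the chain $n-s=d\leq m_0+1\leq n-s$) is precisely the parenthetical gloss in Corollary \ref{group_no_tors}, and the rest is a direct appeal to the torsion-free clause of Theorem \ref{group_homology_0}. The paper offers no written proof here beyond pointing back to Corollary \ref{group_no_tors} (whose hypothesis appears to carry a sign slip, saying ``decreasing'' where the gloss and this reformulation require ``not decreasing''), so your bookkeeping supplies exactly the omitted step and contains no gap.
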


Posets $\Sub G$ and $\Co G$ proved to be connected in the sense of being ``decreasing'':

\begin{lemma}
  Suppose neither $\Sub G$ nor $\Co G$ contains a decreasing level. If $\Sub G$ is decreasing, then $\Co G$ is decreasing.
\end{lemma}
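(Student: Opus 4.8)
The plan is to reduce the statement to a purely homological claim about top-dimensional homology and then invoke Corollary \ref{CG_LG}. First I would fix $n=\dim\Delta\Sub G$. Since $\Sub G$ is assumed decreasing, it is in particular nonempty (the empty poset is not decreasing), so $n\geq 0$, and Corollary \ref{CG_LG} gives $\dim\Delta\Co G=n+1$. The hypothesis that neither $\Sub G$ nor $\Co G$ contains a decreasing level means exactly that $s(\Sub G)=0$ and $s(\Co G)=0$. Under this simplification the defining inequality for a decreasing poset collapses to a condition on the top homology alone: ``$\Sub G$ is decreasing'' becomes $\Hdim\Delta\Sub G\leq n-1$, and the goal ``$\Co G$ is decreasing'' becomes $\Hdim\Delta\Co G\leq n$.

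Next I would extract the vanishing of the top homology of $\Sub G$. From $\Hdim\Delta\Sub G\leq n-1$, which is what ``$\Sub G$ is decreasing'' yields once $s(\Sub G)=0$, we read off $\widetilde H_n(\Delta\Sub G)=0$. The key step is then to transport this vanishing to $\Co G$ through the embedding of Corollary \ref{CG_LG}: since $H_{n+1}(\Delta\Co G)$ embeds into $\widetilde H_n(\Delta\Sub G)^{|G|}=0$, we obtain $H_{n+1}(\Delta\Co G)=0$. As $\dim\Delta\Co G=n+1$, this forces $\Hdim\Delta\Co G\leq n$.

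Finally I would close the argument by comparing with the decreasing threshold for $\Co G$. Using $s(\Co G)=0$ we have $\dim\Delta\Co G-s(\Co G)-1=(n+1)-0-1=n$, and the bound $\Hdim\Delta\Co G\leq n$ established above is precisely the inequality defining a decreasing poset. Hence $\Co G$ is decreasing.

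I expect the main thing to watch will be bookkeeping rather than a genuine obstacle: one must correctly unwind the inductive definition of ``decreasing'' in the regime $s=0$, keep track of the dimension shift $n\mapsto n+1$ between $\Sub G$ and $\Co G$, and confirm that the group controlled by Corollary \ref{CG_LG} is exactly the top homology. It is worth noting that both hypotheses are genuinely used: $s(\Sub G)=0$ converts ``decreasing'' into the sharp bound $\Hdim\Delta\Sub G\leq n-1$, while $s(\Co G)=0$ converts the derived bound $\Hdim\Delta\Co G\leq n$ back into ``$\Co G$ is decreasing.'' Thus the essential mathematical content is just Corollary \ref{CG_LG} restated in the language of decreasing posets.
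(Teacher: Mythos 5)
Your proposal is correct and matches the paper's argument, which dismisses the lemma as ``an obvious corollary of Corollary \ref{CG_LG}''; your write-up simply makes explicit the unwinding of the definition of decreasing in the case $s(\Sub G)=s(\Co G)=0$ and the dimension shift $n\mapsto n+1$. No issues.
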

\begin{proof}
  An obvious corollary of \ref{CG_LG}.
\end{proof}

\section{Suzuki Groups}

Consider a group $G=\Sz(2^{p^k})$, where $p$ is prime and $k\geq 2$ (for $k=1$ the homotopy type of $\Sub G$ was completely determined in \cite{Shareshian}). Then all subgroups of $G$ are either solvable or isomorphic to $\Sz(2^{p^l})$ for some $l<k$. Obviously, the number of subgroups conjugated to a given subgroup $H=\Sz(2^{p^l})$ in $G$ coincides its index $|G:H|$, as $H$ is self-normalizing. Furthermore, all subgroups isomorphic to $\Sz(2^{p^l})$ are contained in a single conjugacy class in $G$ for all $l<k$ (see \cite{Suzuki}). Thus we have
$$
  |\Sz(2^{p^k}):\Sz(2^{p^l})|=|\Sz(2^{p^k}):\Sz(2^{p^{l+1}})||\Sz(2^{p^{l+1}}):\Sz(2^{p^l})|\mbox{ for }l<k.
$$

In particular, every subgroup $\Sz(2^{p^l})$ is contained in a single subgroup $\Sz(2^{p^{l+1}})$, hence, using Quillen's Fiber Lemma we can drop all subgroups isomorphic to $\Sz(2^{p^l})$ for $l<k-1$ without changing the homotopy type of $\Sub G$. Let $R$ be a poset of all solvable subgroups of $G$ and $S$ be a set of subgroups of the type $G'=\Sz(2^{p^{k-1}})$. Then
$$
  \Delta\Sub G\cong\Delta(S\cup R).
$$

Moreover, every subgroup in $S$ is maximal in both $\Sub G$ and $S\cup R\subseteq\Sub G$. By Lemma \ref{nadstrojka1} 
$$
  \Delta\Sub G/\Delta R\cong\Delta(S\cup R)/\Delta R\cong\bigvee\limits^{|G:G'|}\Sigma\Delta((S \cup R)_{<G'}).
$$

In fact, Shareshian proved (see \cite{Shareshian}) that $\Delta R$ is homotopy equivalent to a wedge of circles: $\Delta R\cong\bigvee_{\widetilde\chi(G)}S^1$. Thus, we are ready to use the spectral sequence method: consider a filtration of a complex $\Delta(S\cup R)$: $\Delta R\subseteq\Delta(S\cup R)\cong\Sub G$. In the resulting spectral sequence $E^1$ will contain only two non-zero cells (except for $E^1_{0,0}=\mathbb Z$):
$$
  E^1_{0,1}=\mathbb Z^{|G|};\quad E^1_{1,1}= (\mathbb Z^{|G'|})^{|G:G'|}=\mathbb Z^{|G|}.
$$

Thus we easily deduce the following statements:
\begin{enumerate}
  \item $\Hdim\Sub\Sz(2^{p^k})\leq 2$ for $k\geq 2$. If $k=1$, then obviously $\Hdim\Sub\Sz(2^{p})=1$.
  \item The reduced homologies of $\Sub\Sz(2^{p^k})$ for $k\geq 2$ have the following structure:
  $$
    \begin{array}{l}
      \widetilde H_2(\Sub\Sz(2^{p^k}))=\mathbb Z^s,\\
      \widetilde H_1(\Sub\Sz(2^{p^k}))=\mathbb Z^s\oplus T,\\
    \end{array}
  $$
  where $0\leq s\leq |G|$ and $T$ is a finite abelian group (torsion part).
\end{enumerate}

Consider a group $G=\Sz(2^{pq})$, where $p$ and $q$ are different primes. Let $R\subseteq\Sub G$ be again a set of solvable subgroups of $G$, and $S$ be a set of all simple subgroups of $G$, i.e. a union of two conjugacy classes of $G_p=\Sz(2^{p})$ and $G_q=\Sz(2^{q})$. Then $\Sub G=R\cup S$ and Lemma \ref{nadstrojka1} yields
$$
  \Delta\Sub G/\Delta R=\Delta(S\cup R)/\Delta R\cong
  \bigvee\limits^{|G:G_p|}\Sigma\Delta\Sub G_p \vee
  \bigvee\limits^{|G:G_q|}\Sigma\Delta\Sub G_q.
$$
Thus, a spectral sequence constructed using the same filtration of $\Delta R\subseteq\Delta G$ will contain only two non-zero cells (except for uninteresting $E^1_{0,0}=\mathbb Z$):
$$
  E^1_{0,1}=\mathbb Z^{|G|};\quad E^1_{1,1}=\mathbb Z^{2|G|}
$$

This means that reduced homologies $\Sub\Sz(2^{pq})$, $p$ and $q$ being prime, have the following structure:
$$
  \begin{array}{l}
    \widetilde H_2(\Sub\Sz(2^{p^k}))=\mathbb Z^{|G|+s},\\
    \widetilde H_1(\Sub\Sz(2^{p^k}))=\mathbb Z^s\oplus T,\\
  \end{array}
$$
where $0\leq s\leq |G|$ and $T$ is a torsion part.

For any Suzuki group, the estimates obtained by the same method, are notably less precise:
$$
  \begin{array}{l}
    \dim\widetilde H_1(\Sub\Sz(2^r))\leq |G|,\\
    \dim\widetilde H_{k+1}(\Sub\Sz(2^r))\leq\sum\limits_{r'|r}\dim\widetilde H_k(\Sub\Sz(2^{r'}))\mbox{ for all }k\geq 1.
  \end{array}
$$

\end{document}